\newtheorem{theorem}{Theorem}[section]
\newtheorem{lemma}[theorem]{Lemma}
\newtheorem{proposition}[theorem]{Proposition}
\newtheorem{corollary}[theorem]{Corollary}
\newtheorem{algorithm}[theorem]{Algorithm}
\newtheorem{remark}{Remark}[section]
\def\bx{\mathbf x}
\def\rat#1{\mathcal R_{#1}}
\def\Forall{\quad \hbox{ for all }}
\def\bal#1\eal{\begin{aligned} #1 \end{aligned}}
\def\beq#1\eeq{\begin{equation} #1 \end{equation}}
\def\cI{\widetilde {\mathcal I}}
\def\argmin{\mathop{\rm argmin}}
\def\calA{{\mathcal A}}
\def\calAt{{\mathbb A}}
\def\calIt{{\mathbb I}}
\def\wcalSt{ \widetilde{\mathbb S}}
\def\wcalMt{ \widetilde{\mathbb M}}
\def\wcalAt{ \widetilde {\mathbb A}}
\def\wcalIt{\widetilde{\mathbb I}}
\def\calGt{ {\mathbb G}}
\newcommand{\R}{\mathbb{R}}
\newcommand{\RR}{\mathbb{R}}
\newcommand{\NN}{\mathbb{N}}
\newcommand{\calI}{{\cal I}}
\def\argmin{\mathop{\rm argmin}}
\def\tiluh{{\widetilde u}_h}
\def\tilfh{{\widetilde f}_h}
\def\tilwh{{\widetilde w}_h}
\def\tilvh{{\widetilde v}_h}
\def\tilF{{\widetilde F}}
\def\calSt{{\mathbb S}}
\def\calSt{\widetilde{\mathbb S}}
\def\calIt{{\mathbb I}}
\def\calI{\mathcal I}
\title[Analysis of BURA numerical methods for spectral fractional elliptic equations]
{Analysis of numerical methods for spectral fractional elliptic equations 
based on the best uniform rational approximation}
\author[S. Harizanov, R. Lazarov, P. Marinov, S. Margenov, J. Pasciak]
{Stanislav Harizanov \and Raytcho Lazarov \and Pencho Marinov \and Svetozar Margenov \and Joseph Pasciak}
\address{Institute of Information and Communication Technologies, Bulgarian Academy of 
Sciences, Acad. G. Bonchev, bl. 25A, 1113 Sofia, Bulgaria
(sharizanov@parallel.bas.bg)}
\address{Deptartment of Mathematics, Texas A\&M University, 
College Station, TX 77843-3368, USA (lazarov@math.tamu.edu) and Institute of Mathematics and Informatics,
Bulgarian Academy of Sciences, Acad. G. Bonchev, bl. 8, 1113 Sofia, Bulgaria}
\address{Institute of Information and Communication Technologies, Bulgarian Academy of 
Sciences, Acad. G. Bonchev, bl. 25A, 1113 Sofia, Bulgaria (pencho@parallel.bas.bg)}
\address{Institute of Information and Communication Technologies, Bulgarian Academy of 
Sciences, Acad. G. Bonchev, bl. 25A, 1113 Sofia, Bulgaria (margenov@parallel.bas.bg)}
\address{Deptartment of Mathematics, Texas A\&M University, College Station, TX 77843, USA (pasciak@math.tamu.edu)}
\begin{document}
\date{\today}

\begin{abstract}

Here we  study theoretically and compare experimentally with the methods developed  in \cite{HLMMV18,BP15}
an efficient method for solving systems of algebraic equations
$\wcalAt^\alpha \tiluh= \tilfh$, $0< \alpha <1$, where 
$\wcalAt$ is an $N \times N$ matrix coming from the discretization of a
fractional diffusion operator.
More specifically, we focus on matrices obtained from finite 
difference or finite element approximation of second 
order elliptic problems in $\R^d$, $d=1,2,3$. 
The proposed methods are based on the best uniform rational approximation (BURA)  
$r_{\alpha,k}(t)$ of $t^{\alpha}$  on $[0,1]$. Here $r_{\alpha,k} $ is a
rational function of $t$ involving numerator and denominator polynomials
of degree at most $k$.

The approximation of $ \tiluh= \wcalAt^{-\alpha} \tilfh$
is then $\tilwh=  \lambda_1^{-\alpha} r_{\alpha,k} (\lambda_1 \wcalAt^{-1}) \tilfh$,
where $\lambda_1$ is the smallest eigenvalue of $\wcalAt$.
We show that the proposed method is exponentially convergent with respect to $k$ and has some  
attractive properties. First, it reduces the solution of the nonlocal system to solution of $k$
systems with matrix $(\wcalAt +c_j \wcalIt)$ and $c_j>0$, $j=1,2,\ldots,k$. Thus, 
good computational complexity can be achieved if  fast solvers are available for such systems. 
Second, the original problem and its rational approximation in the
finite difference case are positivity preserving.  In the finite
element case, this valid for schemes obtained by mass lumping 
under certain mild conditions on the mesh. Further, we prove that the  lumped 
mass schemes still have the expected rate of convergence, at times assuming 
additional regularity on the right hand side. Finally, we present comprehensive 
numerical experiments on a number of  model problems for various $\alpha$ 
in one and two spatial dimensions.  These illustrate  the computational behavior of 
the proposed  method and compare its accuracy and efficiency with that of other methods
developed by Harizanov et. al. \cite{HLMMV18} and Bonito and Pasciak \cite{BP15} . 
\\[1ex]
\\[1ex]
Key words:
 fractional diffusion reaction, best uniform rational approximation, error analysis 
\\[1ex]
AMS classification: 65F10, 65D15, 65M06, 65M60

\end{abstract}

\maketitle

\section{Introduction}\label{section1}
\subsection{Spectral fractional powers of elliptic operators} 
\label{ss:problem}
In this paper we consider the following second order elliptic equation with homogeneous Dirichlet data:
\beq
\bal
 - \nabla \cdot( a(x) \nabla v(x)) &= f(x),  &  \hbox{ for } x  \in \Omega,  \\
v(x)&=0,  &   \hbox{ for } x   \in \partial \Omega. 
\eal
\label{strong}
\eeq
Here $\Omega $ is a bounded domain in $\RR^d$,  $d\ge 1$,  and 
we assume that $0<a_0 \le a(x) $ for $x\in \Omega$.

The fractional powers of the elliptic operator associated with the problem \eqref{strong} are defined in 
terms of the weak form of
\eqref{strong}, namely, $v(x)$ is the unique function in  $V= H^1_0(\Omega)$
satisfying
\beq
a(v,\theta)= (f,\theta)\qquad \Forall \theta\in V.
\label{weak}
\eeq
Here
$$
a(w,\theta):=\int_\Omega  a(x) \nabla w(x) \cdot \nabla  \theta(x)\, dx  \quad \hbox{ and } \quad
(w,\theta):=\int_\Omega w(x) \theta(x)\, dx.
$$
For $f\in X := L^2(\Omega)$, \eqref{weak} defines a solution operator $T f :=v$.
Following \cite{kato}, we define an unbounded operator $\calA$ on $X$  as follows.
The operator $\calA$  with domain
 $$ D(\calA) = \{ Tf\,:\, f\in X\}$$
 is defined by  $\calA v=g$ for $v\in D(\calA)$ where  $g\in X$ with
 $Tg=v$.  This is well defined as $T$ is injective.

Thus, the focus of our work in this paper is numerical approximation and algorithm development for the
equation: 
\beq\label{frac-eq}
 \calA^\alpha u = f \quad  \mbox{with a solution} \quad u =\calA^{-\alpha} f.
\eeq
Here  $\calA^{-\alpha}=T^\alpha$ for $\alpha>0$
is defined by Dunford-Taylor integrals which can be transformed  when
$\alpha\in (0,1)$, to the  Balakrishnan integral, e.g. \cite{balakrishnan}: for $f\in X$,
\beq
u=\calA^{-\alpha} f =\frac {\sin(\pi \alpha)} \pi 
\int_0^\infty \mu^{-\alpha} (\mu \calI +\calA)^{-1}f\, d\mu.
\label{bal}
\eeq
This definition is sometimes referred to as the spectral definition
of fractional powers.   One can also use an equivalent definition through the
expansion with respect to the eigenfunctions of $\calA$, 
e.g. \cite{Karniadakis2018fractional,Acosta-Borthagaray2017}.
We note that there are also problems on bounded domains involving
fractional powers, for example, those related to L\'evy diffusion 
\cite{Acosta-Borthagaray2017,ros-oton2015}.   These problems
involve the restriction of non-local operators defined on $\RR^d$
applied to bounded domain functions extended, e.g., by 0, outside of $\Omega$.  
However, in this paper, we focus on the spectral definition \eqref{bal} and the corresponding
approximations by the finite element or finite difference methods.

An operator $L$ is positivity preserving if $Lf\ge 0$ when $f\ge 0$.
We note that by the maximum principle, $(\mu \calI+ \calA)^{-1}$ is a positivity
preserving operator for $\mu\ge 0$
and the formula \eqref{bal} shows that $\calA^{-\alpha}$ is  also.  In many
applications, it is important that the discrete approximations share
this property.

\subsection{Some semi-discrete schemes}\label{ss:semi-discrete}

We study approximations to $u= \calA^{-\alpha} f$ defined in terms  of finite difference or finite element
approximation of the operator  $T$.   We shall use the following convention regarding 
the approximate solutions by these two methods.   
The finite element solution is a function  in $V_h$,
an $N$-dimensional space of continuous piece-wise linear functions over a partition  ${\mathcal T}_h$ of
the domain. Such functions will be denoted by $u_h$, $ v_h$, etc.   
Also we shall denote by $\calAt$, $\calIt$, etc operators acting on the elements $u_h, \theta_h$, etc 
in the finite dimensional space of functions $V_h$.
When a nodal basis of the finite 
element space is introduced, then the vector coefficients in this basis
are denoted  $\tiluh$, $\tilvh$, etc. Under this convention  operator equations in $V_h$ such as 
$\calAt u_h=f_h$ will be written as a system of linear algebraic equations $ \wcalAt  \tiluh=\tilfh$  in $\RR^N$.

In the finite difference case, discrete solutions are vectors in $\RR^N$
and are also denoted $\tiluh$, $\tilvh$, etc. Then the corresponding 
counterparts of operators action on these vectors are denoted by $\wcalAt, \wcalIt,$ etc.

\paragraph{\it \underline{The finite difference approximation}}
In this case the approximation $\tiluh \in \R^N$  of $u$ is given by
\beq
\wcalAt^{\alpha} \tiluh = \cI_h f := \tilfh, \ \ \mbox{or equivalently} \ \ \ 
\tiluh = 
\wcalAt^{-\alpha}  \tilfh, 
\label{fda}
\eeq
where $\wcalAt$ is an $N\times N$ symmetric and positive definite matrix
coming from  a finite difference approximation to the differential
operator appearing in \eqref{strong}, $\tiluh$ is the vector in $\R^N$ of the approximate solution at the 
interior $N$ grid points,  and $\cI_h f:= \tilfh \in \R^N$ denotes the vector of the values of
the data $f$ at the grid points.  Examples of such matrices are given in Subsection \ref{ss:FD-examples}.

\paragraph{\it \underline{The finite element approximation}}
The approximation in the finite element case is defined in terms of a
conforming finite dimensional space $V_h\subset V$ of piece-wise linear 
functions over a quasi-uniform partition ${\mathcal T}_h$ of $\Omega$ 
into $d$-simplices (intervals, triangles, and simplices in 1-D, 2-D, and 3-D, respectively).
Note that the  construction \eqref{bal}  of negative fractional powers
carries over to the finite dimensional case,  replacing $V$ and $X$ by
$V_h$ with $a(\cdot,\cdot)$ and $(\cdot,\cdot)$ unchanged.  

The discrete operator $\calAt$ is defined to be the inverse of
$T_h:V_h\rightarrow V_h$ with
$T_h g_h:=v_h$  where $v_h\in V_h$ is the unique solution to
\beq
a(v_h,\theta_h) = (g_h,\theta_h),\Forall \theta_h\in V_h.
\label{T_h}
\eeq
The finite element approximation $ u_h \in V_h$ of $u$ is then given by
\beq
\calAt^{\alpha} u_h = \pi_h f, \ \ \mbox{or equivalently} \ \ \ 
u_h = \calAt^{-\alpha} \pi_h f:=\calAt^{-\alpha}f_h,
\label{fea}
\eeq
where $\pi_h$ denotes the $L^2(\Omega) $ projection into $V_h$.  In this
case, $N$ denotes the dimension of the space $V_h$ and equals the number
of (interior) degrees of freedom.  The operator $\calAt$ in the finite element
case is a map of $V_h$ into $V_h$ so that 
$\calAt v_h:=g_h$,  where $g_h\in V_h$ is the unique solution to
\beq
 (g_h,\theta_h)=a(v_h,\theta_h),\Forall \theta_h\in V_h.
\label{calAt-f}
\eeq
Let $\{\phi_j\}$ denote the standard ``nodal" basis of $V_h$.
In terms of this basis  
\beq\label{FEM-matrices}
\calAt \mbox{ corresponds to the matrix } \wcalAt = \wcalMt^{-1} \wcalSt, \ \  
\mbox{where} \ \ \wcalSt_{i,j} =a(\phi_i, \phi_j), \ \ \ \wcalMt_{i,j} =(\phi_i, \phi_j). 
\eeq
In the terminology of the finite element method, $\wcalMt$ and $ \wcalSt$ are the mass 
(consistent mass) and  stiffness matrices, respectively.

Obviously, if $\theta =\calAt \eta$ and $\widetilde \theta,\widetilde \eta \in \R^N$ are the coefficient vectors
corresponding to $\theta,\eta\in V_h$, then $\widetilde \theta = \wcalAt
\widetilde \eta$.  Now, for the coefficient vector $\tilfh$ corresponding to $f_h=\pi_hf$ we have 
 $\tilfh = \wcalMt^{-1} \tilF$, where $\tilF$ is the vector with entries
$$
{\tilF}_j=(f,\phi_j), \qquad \hbox{ for }j=1,2,\ldots,N.
$$
Then using vector notation so that $\tiluh$ is the coefficient vector representing the solution $u_h$ 
through the nodal basis, we can write the finite element approximation of \eqref{strong} 
in the form of system 
\beq \label{classic-FEM}
\wcalAt \tiluh = \wcalMt^{-1} \tilF \ \ \mbox{which implies} \ \ \calSt \tiluh = \tilF.
\eeq
Consequently, the finite element approximation of the sub-diffusion problem \eqref{fea} becomes
\beq\label{mat-FEM}
\wcalMt \wcalAt^\alpha \tiluh = \tilF  \quad \hbox{or equivalently }\quad \tiluh=
\wcalAt^{-\alpha} \wcalMt^{-1} \tilF.
\eeq
\vspace{2mm}

\paragraph{\it \underline{The lumped mass finite element approximation}}

We shall also introduce the finite element method with ``mass lumping" for two reasons.
First, it leads to positivity preserving fully discrete methods  (see, Section \ref{ss:fem-lumped}).
Second, it is well known that lumped mass schemes for linear elements on uniform rectangular 
meshes are equivalent to the simplest finite difference approximations. In fact, as shown later, 
the matrix \eqref{FD-matrix-1D} of the
finite difference approximation of 1-D problem is the same as the matrix of the 
lumped finite element method for linear elements.  Therefore, the theoretical study of
the lumped mass method answers the question about the convergence of the finite difference method 
for solving the problem \eqref{frac-eq}, an outstanding issue in this area.

We introduce the lumped mass (discrete) inner product $ (\cdot,\cdot)_h$ on $V_h$ in
following way (see, e.g. \cite[pp.~239--242]{Thomee2006})  for $d$-simplexes in $\R^d$:
\beq\label{mass-lumping}
(z,v)_h = \frac{1}{d+1} \sum_{\tau \in {\mathcal T}_h } \sum_{i=1}^{d+1} |\tau|  z(P_i) v(P_i)
\ \ \mbox{and  } \ \  {\wcalMt}_h =\{ (\phi_i,\phi_k)_h\}_{i,k}^N.
\eeq
Here  $P_1, \dots, P_{d+1}$ are the vertexes of the $d$-simplex $\tau$ and $|\tau|$ is its 
$d$-dimensional measure.  The matrix $\wcalMt_h$ is called lumped mass matrix.
Simply, the ``lumped mass" inner product 
is defined by replacing the integrals determining the finite element mass matrix by local 
quadrature approximation, specifically, the quadrature defined by
summing values at the vertices of a triangle weighted by the area of the triangle. 

In this case, we define $\calAt$ by 
$\calAt v_h:=g_h$  where $g_h\in V_h$ is the unique solution to
\beq
 (g_h,\theta_h)_h=a(v_h,\theta_h),     \Forall \theta_h\in V_h
\label{calAt-fm}
\eeq 
so that 
\beq\label{A-lumped}
\calAt   \mbox{  corresponds to the matrix } \wcalAt = {\wcalMt}_h^{-1} \wcalSt,  
\quad \mbox{where} \quad {\wcalMt}_h =\{ (\phi_i,\phi_k)_h\}_{i,k}^N. 
\eeq
Here  $ {\wcalMt}_h$ is the lumped mass matrix which is diagonal with
positive entries.
We also replace $\pi_h$ by $\calI_h$ so that the lumped mass semi-discrete
approximation is given by
\beq u_h = \calAt^{-\alpha} \calI_h f := f_h\quad \hbox{or} \quad   \tilde u_h = 
\wcalAt^{-\alpha} \widetilde F.
\label{lumped-semi}
\eeq
Here $\widetilde F$ is the coefficient vector in the representation of the function $\calI_h f$
with respect to the nodal basis in $V_h$.
We shall call $\tiluh$ in \eqref{fda} and $ u_h$ in \eqref{fea} and
\eqref{lumped-semi}   {\it semi-discrete   approximations} of $u$.

We note that the matrix $\wcalAt \in \R^{N \times N}$, $N=O(h^{-d})$,
produced by the standard finite element or finite difference method
is positive definite, large, sparse, with a condition number growing like $h^{-2}$ as 
$h \to 0$.  We shall assume in this paper, that the systems of the type 
$(\wcalAt +c \calIt) \tilde u_h = \tilde f_h$,
$c \ge 0$ and $u_h, f_h \in \R^N$ can be solved approximately in an optimal way, namely, by an algorithm  
that requires $O(N)$ arithmetic operations. This could be achieved by using fast solution methods based on
multi-grid, multi-level, domain decomposition, or other techniques.  The aim of our paper is to construct 
a solution method for \eqref{lumped-semi} with optimal computational complexity $O(N)$.

We also note that fractional powers of a symmetric and positive definite matrix are well defined by
matrix diagonalization so 
we can write 
$$
\wcalAt = \Xi^t  \Lambda \Xi
$$
with $\Xi$ an orthogonal matrix and $\Lambda$ a diagonal matrix with
entries, $\Lambda_{ii}=\lambda_i$ where $0<\lambda_1\le
\lambda_2\le \cdots \le \lambda_N$ are the eigenvalues of $\wcalAt$.  In this case,
\begin{equation}\label{eq:spectral}
\wcalAt^{-\alpha} =  \Xi^t  \Lambda^{-\alpha}  \Xi.
\end{equation}
Of course, $\Lambda^{-\alpha}$ is a diagonal matrix with
diagonal entries $\lambda_i^{-\alpha}$, $i=1,2,\ldots,N$.

The direct computation of $\tiluh$ involves the computation of the eigenvalues and 
eigenvectors of the matrix $\wcalAt$. Such computation using this
factorization is, generally, quite expensive, except for a
very narrow class of equations  with constant coefficients on
rectangular domains.   Similar techniques can be employed in both the
standard mass and lumped mass finite element cases, but requires
expansion in a basis of eigenvectors satisfying generalized eigenvector
problems involving the matrices $\wcalMt$, $\wcalMt_h$ and $\wcalSt$ and,
again, direct computation is quite expensive.

Nevertheless, such approach could be made quite efficient 
for approximation of the corresponding elliptic operator by a spectral
numerical method in simple domains, e.g. 
\cite{Song2017ComputingFL} .
For such problems the spectral methods are known to be very accurate due to exponential convergence
rate with respect to the number of the degrees of freedom. Such examples on square 
domains
are presented in  \cite{Song2017ComputingFL}. Alghough, the case of
spectral approximation on a disk domain is discussed there, 
the application of their discretization to the fractional
power problem  would
require  computing the generalized eigenvectors for which fast methods
are not available.  In contrast, their discretization would  be an ideal candidate for the
method discussed here and  only limited by the avaliability of fast solvers
for the stationary problem.  
In our paper, the targeted area is a steady state problem in a complex domain with low regularity solution
discretized by standard finite element or finite difference method, naturally leading to large scale
linear systems.

\subsection{Fully discrete schemes based on the best uniform rational approximation}
\label{ss:fullydiscrete}

Here we will introduce approximations of $u_h =\calAt^{-\alpha} f_h$ by
employing best rational approximations (BURA) to $t^\gamma$ on $[0,1]$ with $\gamma>0$.
Specifically, we consider BURA along the diagonal of
the Walsh table and take $\rat k$ to be the set of rational functions of
the form  $P_k(t)/Q_k(t)$ with $P_k(t)$ and $Q_k(t)$
polynomials of degree $k$ and $Q_k(0)=1$.   The best rational
approximation (BURA) of $t^\gamma$ is the rational function $r_{\gamma,k}\in \rat k$ 
satisfying
\begin{equation}\label{bura}
 r_{\gamma,k}(t) := \argmin_{s(t)\in \rat k}\,  
 \|  s(t) - t^{\gamma} \|_{L^\infty[0,1]}.
\end{equation}
Denoting the error by  
$$
E_{\gamma,k}:=\|r_{\gamma,k} (t) - t^{\gamma} \|_{L^\infty[0,1]},
$$
we apply Theorem 1 of \cite{Stahl93} to claim that there is a constant $C_\gamma>0$,  independent of $k$, such that 
\begin{equation}\label{rat-error}
E_{\gamma,k}  \le C_\gamma e^{-2 \pi \sqrt{k \gamma}}.
\end{equation}
Thus, the BURA error converges exponentially to zero as $k$ becomes large.

\paragraph{\it Rescaling and the semi-discrete approximation}
We rescale the equations \eqref{fda},  \eqref{fea} and \eqref{lumped-semi}:
\beq\label{uh}
\tiluh= \lambda_1^{-\alpha} (\lambda_1 \wcalAt^{-1})^{\alpha} \tilfh
\eeq
where $\lambda_1 $ denotes the smallest  eigenvalue of $\wcalAt$ in 
\eqref{fda}, \eqref{fea} and \eqref{A-lumped}, respectively. 
The scaling by $\lambda_1$ maps the eigenvalues of $\lambda_1 \wcalAt^{-1}$ 
to the  interval $(0,1]$.

We note that  instead of scaling with $\lambda_1 $,  we can scale with any $\delta \in (0,\lambda_1]$.
In this case the eigenvalues of $\delta   \wcalAt^{-1}$ will be again in the interval $(0,1]$ and the method will work
in the same way. This will allow to use any lower bound for  the eigenvalue $\lambda_1$. 
Such a bound could be obtained using the coercivity of the form $a(\cdot,\cdot)$ in $V$
and the Poincar\'e-Friedrichs inequality. For example, 
if $\calAt$ is obtained by finite element discretization of 
$-\Delta$ in a bounded, convex, Lipschitz domain $\Omega \subset \R^d$ with a diameter 
$d_{\Omega}=diam(\Omega)$ with Dirichlet boundary conditions, then  
the bound $\lambda_1 \ge C_P = { \pi^2 d}/{ d_\Omega^2}$ 
follows from the Poincar\'{e}-Friedrichs inequality:
$ C_P \|u\|_{L^2(\Omega)} \le \| \nabla u \|_{L^2(\Omega)}  $ for all $u \in H^1_0(\Omega)$,  \cite[inequality (1.9)]{PW_60}. 
 Another possibility is to find an estimate for $\lambda_1$ by running  few iterations 
of the inverse power method.

Now we introduce the {\it \underline{fully discrete}} approximations:  
$ w_h  \in V_h$ of the finite element approximation $u_h \in V_h$  and 
$\tilwh \in \R^N$ of the finite difference approximation $\tiluh  \in \R^N$ by
\beq
w_h 
=\lambda_1^{-\alpha} r_{\alpha,k} (\lambda_1 \calAt^{-1}) f_h
\quad \mbox{and}  \quad 
\tilwh 
=\lambda_1^{-\alpha} r_{\alpha,k} (\lambda_1 \wcalAt^{-1}) \tilfh.
\label{wh}
\eeq
Here $\calAt$ and $f_h$ are as in \eqref{fea} or \eqref{lumped-semi}
and $\wcalAt$ and $\tilfh$ are as in \eqref{fda}.

In Section \ref{sec:implement}, we study the error of these fully discrete solutions.
For the finite element case we obtain the error estimate 
\beq
\|u_h - w_h\| \le \lambda_1^{-\alpha} E_{\alpha,k} \|f_h\|
\label{pbest}
\eeq
with $\|\cdot\|$ denoting the norm in $L^2(\Omega)$.
In the finite difference case,
we have
\beq
\|\tiluh - \tilwh  \|_{\ell_2} \le \lambda_1^{-\alpha} E_{\alpha,k} \|\tilfh \|_{\ell_2}
\label{pbest-fd}
\eeq
where the norm $\| \cdot \|_{\ell_2} $ 
denotes the  Euclidean norm in $\RR^N$.

We note that the schemes of \cite{HLMMV18} 
are closely related
to our scheme.  These were given for the finite difference case by writing
$$
\tiluh=\wcalAt^{-p} (\wcalAt^{p-\alpha}) \tilfh = \lambda_N^{\alpha-p} \wcalAt^{-p}
(\wcalAt/\lambda_N)^{p-\alpha} \tilfh, \quad p=1,2.
$$
Their approximation becomes
\beq
\tiluh=\lambda_N^{p-\alpha} \wcalAt^{-p} r_{p-\alpha,k}(\wcalAt/\lambda_N) \tilfh    
\quad \mbox{  that implies  } \quad  \|\tiluh - \tilwh \|_{\ell_2} \le \lambda_N^{p-\alpha} E_{\alpha,k} \| \tilfh \|_{\ell_2} .
\label{orig}
\eeq
The main disadvantage of this method compared to ours is that
$\lambda_N$ grows on the order of $h_{min}^{-2}$ with $h_{min}$ denoting
the minimal distance between mesh points so the factor of
$\lambda_N^{p-\alpha}$ deteriorates the convergence rate.  
This is especially harmful when local mesh refinement is used.
 In contrast, $\lambda_1$ is
related to the constant in the Poincar\'e inequality and remains bounded
away from zero independently of the mesh parameter so the appearance of
$\lambda_1^{-\alpha}$ in our method is harmless.

\paragraph{\it Existing solution methods for fractional powers of SPD matrices}

Due to the current  interest of the computational mathematics and physics communities 
in modeling and simulations involving fractional powers of elliptic operators, 
a number of approaches and algorithms has been developed, studied, and tested on various problems, 
 \cite{aceto2018efficient,nochetto2015pde,bonito2018sinc,BP15,nochetto2016pde,HOFREITHER2019}.
 However, the goal of this paper is to develop efficient methods for solving 
large systems (hundreds of thousands or even minions of unknowns) algebraic equations \eqref{fda} that 
utilize efficient methods for solving the system $\wcalAt \tiluh = \tilfh$.
 Below we make a concise survey of such methods.

(1) In the finite difference case, $\tiluh$ is expressed though a
fractional power of a symmetric and positive definite matrix.   
We can look at this problem as a particular case of  the well established methods of 
stable computations of the matrix square root or/and other functions of
matrices, see, e.g. \cite{druskin1998extended,Higham1997,Kenney1991}.
Often these are based on Newton iteration with suitable Pad\'e stabilization. 
Application of such approach is limited to small-size matrices.

(2) An extension of the problem from $ \Omega \subset \R^d$  to a problem in 
$\Omega \times (0,\infty) \subset \R^{d+1}$, see, e.g. \cite{caffarelli2007extension}. Nochetto and co-authors 
in \cite{nochetto2015pde,nochetto2016pde}  developed efficient computational method based on  finite element 
discretization of the extended problem and subsequent use of multi-grid technique. The main deficiency of the 
method is that instead of problem in $\R^d$, one needs to work in a domain in one dimension higher which adds to the 
complexity of the developed algorithms.

(3) Reformulation of the problem as a pseudo-parabolic on the cylinder $(0,1) \times \Omega$
by adding a time variable $t \in (0,1)$. Such methods were proposed, developed, and tested by Vabishchevich in
\cite{vabishchevich2015numerical,vabishchevich2018numerical}. As shown in the numerical experiments in 
\cite{HOFREITHER2019}, the method is very slow when using uniform time stepping. 
However, the improvement proposed in \cite{DuanLazarovPasciak,CIEGIS2019} make this method quite competitive.

(4) Approximation of the Dunford-Taylor integral representation of the solution of equations involving fractional powers of 
elliptic operators, proposed in the pioneering paper of Bonito and Pasciak \cite{BP15}. Further the idea was extended and 
augmented in various directions in \cite{Aceto_17,BP17,bonito2019sinc,bonito2019numerical}. These methods use exponentially 
convergent sinc quadratures.

(5) Best uniform rational approximation of the function $t^\alpha$ on $[0,1]$, proposed in \cite{HMMV2016,HLMMV18},
further developed in \cite{harizanov2018positive,harizanov2019cmwa,harizanov2019analysis} 
and called BURA methods. In this paper we propose, analyze, and test a new method given by \eqref{wh} that is
based on rescaling the problem with the smallest eigenvalue of matrix $\wcalAt$ (or the operator $\calAt$).


As shown recently in \cite{HOFREITHER2019},  in appearance different,
these methods  are interrelated and all seem to involve some rational approximation of
fractional powers of the underlying elliptic operator, see, e.g. \cite{Aceto_17,bonito2018sinc,BP15}. 
In the mentioned above works the 
numerical algorithm results in a rational approximation of $ \calA^{-\alpha} f$
where the elliptic operator  $\calA$ is replaced by some approximation by finite elements. 
The algorithm we propose and study in this paper is based on the best uniform 
rational approximation and in principle should be at least as good as any of these methods.
In fact, our comparisons show that in many cases the proposed method performs  
significantly better.

However, one should realize that BURA-based  methods involve 
Remez method of finding the best uniform rational approximation
by solving the highly non-linear min-max problem \eqref{bura}.  
It is well known that Remez algorithm is very sensitive to the precision of the computer arithmetic, 
cf. \cite{PGMASA1987,varga1992some,Driscoll2014}. One of the main reasons is that almost all extreme points
of the error function tend to the origin
as $k\to\infty$, cf. \cite[Theorem 4]{SS93}. Various techniques for stabilization
of the method have been used, mostly by using Tchebyshev orthogonal polynomials, cf.
\cite{Driscoll2014}. It is observed that to achieve high accuracy one needs to use high arithmetic precision.
For example, in \cite{varga1992some} the first 25 correct decimal digits of the BURA error of
$t^\alpha$ for six values of $\alpha\in [0,1]$ are reported for
degree up to $k=30$ by using computer arithmetic with 200 significant digits.

\paragraph{\it Positivity preserving schemes}

 In the {\it finite difference} case,  if $f(x)\ge 0 $ for $x\in \Omega$
then the vector $\tiluh$ defined by \eqref{fda} has non-negative
entries.  The issue of positivity preservation of approximate solution of problems involving the spectral
fractional Laplacian by finite difference method was first discussed and established for the 
BURA scheme \eqref{orig} in \cite{harizanov2018positive}.
We will show that the solution $\tilwh$ of \eqref{wh}  has non-negative entries as well.

However, in the {\it finite element} case, we will show that $u_h(x)$ can have negative values
even when $f$ is non-negative, i.e. the consistent finite element approximation may loose
non-negativity of the solution.  Instead, we shall use  schemes obtained by mass lumping discussed in
Subsection \ref{ss:semi-discrete}.  Note that in \eqref{fea} we 
also replace $\pi_h$ by the interpolant  $\calI_h:C^0(\Omega)\rightarrow V_h$.  This leads to
a semi-discrete solution 
\beq
u_h= \calAt^{-\alpha} \calI_h f = \lambda_1^{-\alpha} (\lambda_1\calAt^{-1})^\alpha \calI_h f
\label{massu}
\eeq
and its BURA approximation (fully discrete approximation)
\beq
w_h= \lambda_1^{-\alpha} r_{\alpha,k}(\lambda_1\calAt^{-1}) \calI_h f
\label{massw}
\eeq
with $\calAt$ defined by \eqref{A-lumped}.  
In this case, $w_h$ is non-negative when $f$ is non-negative
and most of the approximation properties of $u_h$ are still preserved
(see, Theorem~ \ref{masslumped}).
 
\subsection{Organization of the paper and our contributions}
\label{ss:contributions}

Section \ref{sec:implement} examines the implementation of \eqref{wh} for both, the finite 
element \eqref{fea} and finite difference approximations \eqref{fda}, and also proves the estimates \eqref{pbest} 
and \eqref{pbest-fd} for their error.  Here we also consider the lumped mass method and discuss the 
non-negativity of the solution produced by non-negative data.
In Section \ref{sec:FD} we give several matrices obtained by finite difference method and 
perform some extensive computations on a number of test problems in one and two 
spatial dimensions. We compare the accuracy of 
the proposed in this paper new method, called P-BURA, with the BURA 
method \eqref{orig} of \cite{HLMMV18} and with the method of Bonito and Pasciak, \cite{BP15},
on two 2-dimensional model problems with smooth (Table \ref{tab:2D results 2}) 
and non-smooth right hand sides (Table \ref{tab:2D results 1}). Further in Section  \ref{sec:FD}
we study the efficiency of the method on some non-uniform meshes refined locally 
in order to capture the interior layers of the solution. The results are reported  in
Tables \ref{tab:1Dconst} and  \ref{tab:1Dsource}.
Section  \ref{sec:lumped}  focuses on the finite element approximations. 
In Theorem \ref{masslumped} we provide error
estimates for $u-u_h$ both in the consistent mass finite element method (cf. \cite{BP15}) and
the case of mass lumping.  As a consequence, in Corollary \ref{FD-convergence} we establish
an error bound for the finite difference approximation of boundary value 
problem for the spectral fractional elliptic equation.  To best of our knowledge, 
error bounds for the approximations of the spectral fractional Laplacian by finite 
differences is not available.

The main contributions of this paper are: (1) derivation an analysis of an efficient BURA 
method for solving systems of equations $\wcalAt^\alpha \tiluh = \tilfh$, $0< \alpha <1$,  in $\R^N$, where
$ \wcalAt$ is a symmetric and positive sparse matrix obtained from finite difference or finite element 
approximations of elliptic operators; (2)  analysis of lumped mass schemes that lead to positivity 
preserving methods; (3) estimates of approximation error of the spectral fractional Laplacian by finite 
differences.

\section{Implementation and basic estimates of the error}
\label{sec:implement}

\subsection{Properties of the best uniform rational approximation}
\label{ss:BURA}

In this section, we discuss the implementation of \eqref{wh} in the
finite difference and finite element cases.

First, on Table \ref{t:error} 
we present the computed error of BURA $r_{\alpha,k}(t)$ of $t^\alpha$ using the
modified Remez algorithm, e.g. \cite{HLMMV18}. As expected, the approximation error 
for large $\alpha$ is in the very reasonable 
range of $10^{-5} - 10^{-7}$ for 
$k=7 - 10$. Moreover, for this range of $k$ the Remez algorithm is relatively stable and 
the coefficients of BURA function $r_{\alpha,k}(t)$ are determined with good accuracy.
\begin{table}[h!]
\caption{ Errors $E_{\alpha,k}$ of $r_{\alpha,k}(t)$ for $t\in[0,1]$, used in BURA 
and P-BURA computations.}\label{t:error}
\centering
\begin{tabular}{|c|c|c|c|c|c|c|}
\hline
$\alpha$&$E_{\alpha,5}$&$E_{\alpha,6}$&$E_{\alpha,7}$&$E_{\alpha,8}$&$E_{\alpha,9}$&$E_{\alpha,10}$\\ \hline
  0.75 & 2.8676e-5 & 9.2522e-6 & 3.2566e-6 & 1.2288e-6 & 4.9096e-7 & 2.0584e-7\\
  0.50 & 2.6896e-4 & 1.0747e-4 & 4.6037e-5 & 2.0852e-5 & 9.8893e-6 & 4.8760e-6\\
  0.25 & 2.7348e-3 & 1.4312e-3 & 7.8650e-4 & 4.4950e-4 & 2.6536e-4 & 1.6100e-4\\\hline
\end{tabular}
\end{table}

Next, we prove the estimates \eqref{pbest} and \eqref{pbest-fd}.
It is known that the best rational approximation
$r_{\alpha,k}(x)=P(x)/Q(x)$ 
for
$\alpha \in (0,1)$ is
non-degenerate, i.e., the polynomials $P$ and $Q$ are of full
degree.   Let the roots of $P$ and $Q$ be denoted by
$\zeta_1, \dots, \zeta_k$ and  $d_1, \dots, d_k$, respectively.    
It is shown in
\cite{SS93,stahl2003}  that the roots interlace and satisfy
\begin{equation}\label{interlacing}
  0 > \zeta_1 > d_1 > \zeta_2 > d_2 > \cdots > \zeta_k > d_k.
\end{equation}
We then have
\beq
r_{\alpha,k} (t)=b\prod_{i=1}^k\frac{t-\zeta_i}{t-d_i}\label{prodr}
\eeq
where, by \eqref{interlacing} and the fact that
$r_{\alpha,k}$ is a best approximation to a non-negative function, $b>0$ and
$P(x)>0$ and $Q(x)>0$ for $x\ge 0$.

\def\trg{\tilde r_{\alpha,k}}

We consider $\trg$ defined by
$$\trg(\lambda):=r_{\alpha,k}(1/\lambda)=\frac {\widetilde
  P(\lambda)}{\widetilde Q(\lambda)}.$$
Here $\widetilde P(\lambda)=\lambda^k P(\lambda^{-1})$ and $\widetilde Q(\lambda)= \lambda^k
Q(\lambda^{-1})$  and hence their coefficients 
are defined by reversing
the order of the coefficients in $P,Q$ appearing in $r_{\alpha,k}$.  
In
addition, \eqref{interlacing} implies
\begin{equation}\label{interlacing1}
  0 >  \tilde d_k > \tilde \zeta_k > \tilde d_{k-1} > \tilde
  \zeta_{k-1}\cdots >   \tilde d_1>\tilde \zeta_1.
\end{equation}  Here $\tilde d_i=1/d_i$ and $\tilde \zeta_i = 1/\zeta_i
$ are the roots of $\widetilde P$ and $\widetilde Q$, respectively.

\begin{proposition}\label{lemma:BURA}  For $\alpha\in (0,1)$, 
\beq
  \trg(\lambda)=c_0+\sum_{i=1}^k \frac{c_i}{\lambda-\tilde d_i}
  \label{trg}
  \eeq
  where $c_i>0$ for $i=0,1,\ldots,k$.
\end{proposition}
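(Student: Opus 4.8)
The plan is to convert the product representation \eqref{prodr} of $r_{\alpha,k}$ into a product representation of $\trg$ in the variable $\lambda$, read off its partial fraction decomposition, and then verify the sign of every coefficient directly from the interlacing \eqref{interlacing1}.

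First I would substitute $t=1/\lambda$ into \eqref{prodr}. Using $1/\lambda-\zeta_i=-\zeta_i\lambda^{-1}(\lambda-1/\zeta_i)$ and the analogous identity for the denominator factors, the powers of $\lambda^{-1}$ cancel and one is left with
\[
\trg(\lambda)=\tilde b\prod_{i=1}^k\frac{\lambda-\tilde\zeta_i}{\lambda-\tilde d_i},\qquad \tilde b:=b\prod_{i=1}^k\frac{\zeta_i}{d_i}.
\]
Because $\zeta_i<0$ and $d_i<0$ for every $i$ by \eqref{interlacing}, each quotient $\zeta_i/d_i$ is positive, so $\tilde b>0$. The numerator and denominator of $\trg$ both have exact degree $k$, and the poles $\tilde d_i$ are simple and pairwise distinct by \eqref{interlacing1}; hence $\trg$ admits a partial fraction decomposition of precisely the form \eqref{trg}.

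Next I would identify the coefficients. The constant term is the value at infinity, $c_0=\lim_{\lambda\to\infty}\trg(\lambda)=\tilde b>0$, while the residue at the simple pole $\tilde d_i$ is
\[
c_i=\lim_{\lambda\to\tilde d_i}(\lambda-\tilde d_i)\trg(\lambda)=\tilde b\,(\tilde d_i-\tilde\zeta_i)\prod_{j\ne i}\frac{\tilde d_i-\tilde\zeta_j}{\tilde d_i-\tilde d_j}.
\]

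The hard part will be showing that every factor on the right is positive; this is the only step that is not routine, and it is governed entirely by \eqref{interlacing1}, which orders the $2k$ numbers strictly decreasingly as $\tilde d_k>\tilde\zeta_k>\tilde d_{k-1}>\tilde\zeta_{k-1}>\cdots>\tilde d_1>\tilde\zeta_1$. For fixed $i$, the pole $\tilde d_i$ is immediately followed by $\tilde\zeta_i$ in this list, so $\tilde d_i-\tilde\zeta_i>0$. For each $j>i$, both $\tilde d_j$ and $\tilde\zeta_j$ lie to the left of $\tilde d_i$ (and are therefore larger), so the numerator $\tilde d_i-\tilde\zeta_j$ and denominator $\tilde d_i-\tilde d_j$ of the $j$-th factor are both negative and the factor is positive; for each $j<i$ both $\tilde d_j$ and $\tilde\zeta_j$ lie to the right of $\tilde d_i$, so both differences are positive and the factor is again positive. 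Consequently $c_i>0$ for all $i$, which together with $c_0>0$ yields the claim. I expect the consistent bookkeeping of the positions in this last step to require the most care, although it is elementary once \eqref{interlacing1} is invoked.
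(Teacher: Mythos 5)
Your proof is correct, and it shares the paper's skeleton --- partial fraction decomposition of $\tilde r_{\alpha,k}$ over the simple poles $\tilde d_i$, the identification $c_0=\lim_{\lambda\to\infty}\tilde r_{\alpha,k}(\lambda)=b\prod_{i}\zeta_i/d_i>0$, and residue computations for the remaining coefficients --- but the crucial positivity step is argued along a genuinely different line. The paper never factors $\widetilde P$: it evaluates the cleared-denominator identity at $\tilde d_i$ to obtain $\widetilde P(\tilde d_i)=c_i\prod_{j\neq i}(\tilde d_i-\tilde d_j)$ as in \eqref{tprod}, and then runs a parity argument, observing that the signs of $\widetilde P(\tilde d_i)$ and of the products $\prod_{j\neq i}(\tilde d_i-\tilde d_j)$ both oscillate with $i$ and agree at the anchor $i=k$, where the product is positive and $\widetilde P(\tilde d_k)$ has the sign of $\widetilde P(1)=P(1)>0$. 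You instead push the product representation \eqref{prodr} through the substitution $t=1/\lambda$, so that each residue appears explicitly as $\tilde b\,(\tilde d_i-\tilde\zeta_i)\prod_{j\neq i}\bigl[(\tilde d_i-\tilde\zeta_j)/(\tilde d_i-\tilde d_j)\bigr]$, and positivity is then checked factor by factor from \eqref{interlacing1}: for $j\neq i$ the numerator and denominator differences have the same sign, and the unpaired factor $\tilde d_i-\tilde\zeta_i$ is positive. Your version buys transparency and robustness --- each $c_i$ is manifestly a product of positive quantities, with no global sign bookkeeping and no need for the anchoring evaluation at $\lambda=1$ --- while the paper's version is shorter on the page but leaves the oscillation claims for the reader to verify; both rest on exactly the same interlacing input.
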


\begin{proof} We note that
  $$  
  c_0=\lim_{\lambda\rightarrow \infty} \trg(\lambda)
 =  \lim_{x\rightarrow 0} r_{\alpha,k} (x)=
  b\prod_{i=1}^k{\zeta_i}/{d_i}>0.
  $$
  The remaining coefficients in \eqref{trg} are
  determined by the equation
  $$
  \widetilde P(\lambda) =\widetilde Q(\lambda)\bigg [ c_0+\sum_{i=1}^k
    \frac{c_i}{\lambda-\tilde d_i}\bigg ]
  $$
  which when evaluated at $\tilde d_i$ implies 
  \beq
  \widetilde P(\tilde d_i) = c_i \prod_{j\neq i}  (\tilde d_i-\tilde d_j).
  \label{tprod}
  \eeq
  It follows from \eqref{interlacing1} that both the signs of
  $\widetilde P(\tilde d_i) $ and those of the product on the 
  right hand side of \eqref{tprod} oscillate with $i$.  In addition, \eqref{interlacing1}
  also implies that the product in \eqref{tprod} is positive for $i=k$
  and the sign of $\widetilde P(\tilde d_k)$ is the same as that of
  $$\widetilde P(1) = P(1) >0.$$
  It follows that $c_i>0$, for $i=1,2,\ldots,k $.
\end{proof}

Now consider the implementation of \eqref{wh}.  By \eqref{mat-FEM}, the
coefficients vector $\tilwh$ of the representation of $w_h$ with respect to the nodal basis in $V_h$ is given by
\beq
\tilwh 
=\lambda_1^{-\alpha} \widetilde r_{\alpha,k} (\lambda_1^{-1} \wcalAt) 
\wcalMt ^{-1} \widetilde F\label{tilwh}
\eeq
with $\wcalAt $ denoting the matrix in \eqref{FEM-matrices}.   Applying 
Proposition~\ref{lemma:BURA} gives
\beq
\bal 
\tilwh &= \lambda_1^{-\alpha}\bigg( c_0 \wcalMt ^{-1} \widetilde F +\sum_{i=1}^k (\lambda_1 c_i)( \wcalAt -\lambda_1 \tilde
  d_i\wcalIt)^{-1} \wcalMt ^{-1} \widetilde F\bigg)\\
&= \lambda_1^{-\alpha}\bigg( c_0 \wcalMt ^{-1} \widetilde F +\sum_{i=1}^k (\lambda_1 c_i)( \wcalSt -\lambda_1 \tilde
  d_i\wcalMt)^{-1} \widetilde F\bigg).
\eal
\label{sumwh}
\eeq
We note that even though $\widetilde F$ is a non-negative  vector when
$f$ is non-negative, the matrix  $( \wcalSt -\lambda_1 \tilde
  d_i\wcalMt)^{-1}$ is not positivity preserving when $-\lambda_1 d_i$
  is large.    This is because even if $\wcalSt$ is an $M$-matrix,
  $\wcalSt +\gamma \wcalMt$ for large $\gamma$ 
becomes a matrix with the same
  sparsity pattern where every matrix entry in the pattern is positive.
  The inverse of such a matrix is NOT positivity preserving\footnote{This, in
  turn, implies that the matrix $\wcalAt$ in the finite element case
  CANNOT be an $M$-matrix.}.
As the
  matrix $\wcalMt$  has positive entries, its inverse appearing in the first
  term above is also not positive preserving.

In the finite difference case,  $\tilwh$ satisfies
\eqref{tilwh} with 
$\wcalMt f_h$ replaced by $\tilfh$ and $\wcalAt$ denoting the finite
difference matrix.    Applying 
Proposition~\ref{lemma:BURA} gives
\beq
\tilwh = \lambda_1^{-\alpha}\bigg( c_0 \tilfh +\sum_{i=1}^k (\lambda_1 c_i)( \wcalAt -\lambda_1 \tilde
  d_i\wcalIt)^{-1} \tilfh \bigg).
\label{sumtwh}
\eeq
The finite difference matrix $\wcalAt$ is generally an $M$-matrix and we
have the following theorem (is a consequence of
Proposition~\ref{lemma:BURA}
and \eqref{sumtwh}).

\begin{proposition}\label{lemma:BURA-positive}  
Assume that the finite difference matrix $\wcalAt$ is an M-matrix, i.e. all diagonal
entries are positive and all non-diagonal entries are non-positive.
If $ \tilfh$ has all its entries non-negative then the solution $\tilwh$ 
represented by \eqref{sumwh} has all its entries non-negative, i.e.  
the method is positivity preserving.
\end{proposition}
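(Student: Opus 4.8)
The plan is to read off all the sign information already encoded in the representation \eqref{sumtwh} and then invoke the inverse-positivity of M-matrices term by term. First I would record the two facts supplied by the preceding analysis: Proposition~\ref{lemma:BURA} gives $c_i>0$ for every $i=0,1,\ldots,k$, while the interlacing \eqref{interlacing1} forces $\tilde d_i<0$ for all $i$. Since $\lambda_1>0$, this means each shift $c_j:=-\lambda_1\tilde d_i$ is strictly positive, so the matrix appearing in the $i$-th summand can be written as $\wcalAt-\lambda_1\tilde d_i\wcalIt=\wcalAt+c_j\wcalIt$ with $c_j>0$. In particular every matrix we must invert is a positive shift of $\wcalAt$, not a negative one.

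The key step is to establish that each shifted matrix $\wcalAt+c_j\wcalIt$ is inverse-positive, i.e.\ its inverse has only non-negative entries. Adding the positive multiple $c_j\wcalIt$ of the identity leaves the off-diagonal entries of $\wcalAt$ unchanged (hence still non-positive) while increasing the diagonal entries (hence still positive); thus $\wcalAt+c_j\wcalIt$ again has the sign pattern demanded in the hypothesis. Because $\wcalAt$ is symmetric and positive definite, so is $\wcalAt+c_j\wcalIt$, which makes it a Stieltjes matrix and therefore a symmetric M-matrix; the classical monotonicity property of M-matrices then yields $(\wcalAt+c_j\wcalIt)^{-1}\ge 0$ entry-wise. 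I expect this inverse-positivity to be the main obstacle, not because the argument is long but because it is exactly the place where the M-matrix hypothesis is essential: the sign pattern alone does not force inverse-positivity, and it is the positive-definiteness already available in our setting that guarantees nonsingularity and lets me quote the Stieltjes-matrix version of the result cleanly.

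Finally I would assemble the pieces. Each summand $(\lambda_1 c_i)(\wcalAt+c_j\wcalIt)^{-1}\tilfh$ is the product of a positive scalar, an entry-wise non-negative matrix, and the non-negative vector $\tilfh$, and is therefore a non-negative vector; the leading term $c_0\tilfh$ is likewise non-negative. Summing these non-negative vectors and multiplying by the positive factor $\lambda_1^{-\alpha}$ preserves non-negativity, so $\tilwh\ge 0$ componentwise, which is precisely the claimed positivity preservation.
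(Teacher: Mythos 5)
Your proof is correct and takes essentially the same route the paper intends: the paper presents this proposition as an immediate consequence of Proposition~\ref{lemma:BURA} and the representation \eqref{sumtwh}, and your argument simply supplies the omitted details ($c_i>0$, $\tilde d_i<0$, so each shift $\wcalAt-\lambda_1\tilde d_i\wcalIt$ is a positive shift of $\wcalAt$ that is again inverse-positive, and the terms are assembled with positive weights). Your observation that the sign pattern alone does not force inverse-positivity, and that the symmetric positive definiteness of $\wcalAt$ (making each shifted matrix a Stieltjes matrix) is what justifies $(\wcalAt-\lambda_1\tilde d_i\wcalIt)^{-1}\ge 0$, is a careful and correct sharpening of the paper's loose statement of the M-matrix hypothesis.
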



The above sum is trivially parallelizable as the result of each term is
independent of all others.
Alternatively, by \eqref{prodr},
$$
\tilde r_{\alpha,k} (\lambda) =
b\prod_{i=1}^k\frac{1-\lambda\zeta_i}{1-\lambda d_i}
$$
and hence
\beq
w_h = \lambda_1^{-\alpha}b\bigg[\prod_{i=1}^k(\lambda_1\calIt-\zeta_i\calAt )(\lambda_1\calIt-d_i\calAt)^{-1}\bigg] f_h.
\label{prodwh}
\eeq
This product needs to be computed sequentially, computing the $j$-term
product by applying the $j$'th  operator to the $j-1$'st term product.

We note that both,  the additive \eqref{sumwh} and the multiplicative \eqref{prodwh} versions  of the method 
led to stable computations. Indeed, due to the interlacing properties \eqref{interlacing} 
the $L^2$-norm of $ (\lambda_1\calIt-\zeta_i\calAt )(\lambda_1\calIt-d_i\calAt)^{-1}$ is less than 1 and 
the product computation is stable.
Similarly, since $c_j>0$, $j=0,\dots,k$ the summation in \eqref{sumwh} is stable.

\subsection{Finite difference scheme}
\label{ss:FD}
We first consider the finite difference case.  
In this case, each  term in the sum \eqref{sumwh} requires a sparse matrix solve 
that involves a matrix which is a sum of  $\calAt$ and the scaled (with a positive factor)  
identity.  The sequential computation is similar with each step involving a sparse matrix 
multiply and a sparse matrix solve.

We next show that \eqref{pbest-fd} holds.   We note that 
$$\tiluh - \tilwh = \lambda_1^{-\alpha} [(\lambda_1 \wcalAt^{-1})^{\alpha} -
r_{\alpha,k} (\lambda_1 \wcalAt^{-1})] \cI_h f:=   \lambda_1^{-\alpha} \calGt
\cI_h f.$$
The above matrix $\calGt = (\lambda_1 \wcalAt^{-1})^{\alpha} - r_{\alpha,k} (\lambda_1 \wcalAt^{-1})$ 
is symmetric and hence 
$$
\| \tiluh - \tilwh  \|_{\ell_2} \le \lambda_1^{-\alpha} \rho(\calGt)
\|\cI_h f \|_{\ell_2}
$$
with $\rho(\calGt)$ denoting the spectral radius of $\calGt$.   The inequality
\eqref{pbest-fd} follows from noting that the eigenvalues of $\calGt$ come
from those of $\calAt$, i.e.,
$$
\rho(\calGt)= \max_{i=1}^N |(\lambda_1/\lambda_i)^\alpha
-r_{\alpha,k}(\lambda_1/\lambda_i)|\le \max_{1 \le t \le \infty} |t^\alpha - r_{\alpha,k}(t)| = E_{\alpha,k}.
$$

\subsection{Consistent mass finite element method} 
\label{ss:FEM}

The implementation of finite element problems is done in terms of
stiffness and mass matrices denoted  by $\wcalSt$ and $\wcalMt$, respectively, and vectors 
in $\RR^N$ where $N$ is the dimension of $V_h$ defined by \eqref{FEM-matrices}.
Then the coefficients $\tilwh$ for the function $w_h$ using \eqref{sumwh}  are given by
\beq
\tilwh 
= \lambda_1^{-\alpha}\bigg(c_0 \wcalMt^{-1} \widetilde F 
  +\sum_{i=1}^k (\lambda_1 c_i)( \wcalSt -\lambda_1 \tilde d_i\wcalMt)^{-1} \widetilde F\bigg).
\label{sumwhfe}
\eeq
Similarly,  for the product case  we get  
\beq
\tilwh = \lambda_1^{-\alpha}b
 \bigg[\prod_{i=1}^k 
(\lambda_1\wcalMt-d_i\wcalSt)^{-1}(\lambda_1\wcalMt-\zeta_i\wcalSt)\bigg] \wcalMt^{-1} \widetilde F.
\label{prodwhfe}
\eeq
 The matrices in parenthesis appearing both in \eqref{sumwhfe} and \eqref{prodwhfe} are
positive linear combinations of the symmetric and positive definite
stiffness and mass matrices.

The validation of \eqref{pbest} in the finite element case is similar.
Let $\{\psi_1,\psi_2,\ldots,\psi_N\}$ be an  $L^2(\Omega)$ orthonormal basis of eigenfunctions in
$V_h$ with
eigenvalues $0<\lambda_1\le \lambda_2\le \cdots \le \lambda_N$
satisfying the generalized eigenvalue problem:
$$
a(\psi_i,\theta)= \lambda_i(\psi_i,\theta),\Forall \theta\in V_h.
$$
Note that $\calAt$  defined through $(\calAt \psi_i, \theta)= a(\psi_i,\theta)$    
has matrix representation  
$\wcalMt^{-1} \calSt$ so that this eigenvalue problem 
has equivalent algebraic form $\calSt {\widetilde \psi_j} = \lambda_j \wcalMt {\widetilde \psi_j}$. 
Expanding $\pi_h f$, $w_h$ and $u_h$ in this  basis leads to 
$$u_h-w_h = \lambda_1^{-\alpha} \sum_{i=1}^n [(\lambda_1 /\lambda_i)^{\alpha} -
r_{\alpha,k} (\lambda_1/\lambda_i)]\,  (f,\psi_i) \,\psi_i.$$
The quantities in brackets above are bounded in absolute value by
$E_{\alpha,k}$ and the inequality \eqref{pbest} follows from Parseval's
formula, i.e.,
\beq\label{BURA-error}
\|u_h-w_h\|^2\le \lambda_1^{-\alpha}E_{\alpha,k}^2 \sum_{i=1}^n\
(f,\psi_i)^2 =
 \lambda_1^{-\alpha}E_{\alpha,k}^2\|\pi_h f\|^2
 \eeq
and \eqref{pbest} follows.

Though the consistent mass matrix $\wcalMt$ has the same sparsity pattern as the 
stiffness matrix it is not an $M$-matrix. Then the
formula \eqref{sumwhfe} shows that even when $\calSt$ is an $M$-matrix, the 
semidiscrete solution $\tilwh$ may not be non-negative for $f(x) \ge 0$
since $\wcalSt+\mu \wcalMt$ fails to be an $M$-matrix for large $\mu$.  
The issue of positivity preservation is discussed in more details and 
illustrated with numerical examples in Subsection \ref{ss:consistmass}.

\subsection{Lumped mass finite element method}
\label{ss:fem-lumped} 

Since in this time 
$\calAt $ has matrix representation $  \wcalMt_h^{-1} \calSt$ then \eqref{sumwhfe} becomes 
\beq
\tilwh 
= \lambda_1^{-\alpha}\bigg(c_0 \wcalMt_h^{-1} \widetilde F 
     +\sum_{i=1}^k (\lambda_1 c_i)( \wcalSt -\lambda_1 \tilde d_i\wcalMt_h)^{-1} \widetilde F\bigg).
\label{sumwhfe-lump}
\eeq
The analysis of this scheme is the same as the analysis of the standard FEM scheme.  The
only difference is that now we need to 
use the eigenvalues and eigenfunctions of 
$ a(\psi_i,\theta)= \lambda_i(\psi_i,\theta)_h$ for all $\theta\in V_h$,
and the analysis follows easily.

The main purpose of introducing the lumped mass method is to ensure 
non-negativity of the fully discrete solution  $ \tilwh $ 
in case of non-negative data $f$. 
Due to \eqref{interlacing1} and Proposition \ref{lemma:BURA}
 we have  $\tilde d_i <0$, and $c_i >0$ for $i=1, \dots, k$.
Matrix $\wcalMt_h$ is diagonal with positive elements  and 
representation \eqref{sumwhfe-lump}  shows that 
if $\wcalSt$ is an $M$-matrix,  then $   \wcalSt -\lambda_1 \tilde d_i\wcalMt_h$, $i=1, \dots,k$
will be all $M$-matrices and the fully discrete solution will satisfy $ \tilwh  \ge 0 $ if $ f \ge 0$.
Thus, to ensure non-negativity it is sufficient
the stiffness matrix $\calSt$ to be an $M$-matrix. 
This phenomenon is well understood in the case considered in this paper, namely,  conforming linear
finite elements on simplicial meshes. For  $a(x)=1$ and $d=2$ in \cite{ciarlet-raviart} this was shown to hold
provided that the mesh triangles do not have any angles exceeding $\pi/2$. 
The most general result  is established in \cite[Lemma 2.1]{xu_zikatanov1999}, namely, 
a sufficient and necessary condition $\wcalSt$ to be an $M$-matrix
for simplicial meshes in $\R^d$ for any $d \ge 2$. The condition is expressed 
through the angles between faces and areas of the simplex faces and improves
the result from  \cite{ciarlet-raviart} for $d=2$.

\section{
Finite difference approximation of the fractional diffusion problem} 
\label{sec:FD}

The linear operators we consider in this section are approximations of \eqref{strong}
by finite differences. We begin with some simple examples.

\subsection{Example of Finite Difference Approximations}\label{ss:FD-examples}
Now we give two particular examples of finite difference approximations of elliptic operators. These
are used to illustrate the above theory and are also a basis of our numerical experiments.

\paragraph{\it Example 1} We first consider the one-dimensional equation \eqref{strong} with
variable coefficient, namely, we study the following boundary value problem
$ - ( a(x) u^{\prime})^{\prime} =f(x),$ $ u(0)=0, \ u(1)=0, \ $ for $ 0<x<1$,
where $a(x)$ is uniformly positive function on $[0,1]$. On a uniform mesh $x_i =ih$, $i=0, \dots, N+1$,
$ h=1/(N+1)$,  we consider the three-point  approximation of the second derivative
\begin{equation*}
\begin{split}
 u''(x_i) &
\approx 
 \frac{1}{h}\left (a_{i+\frac12}\frac{u(x_{i+1}) - u(x_i)}{h} - a_{i-\frac12}\frac{u(x_i) - u(x_{i-1})}{h} \right )
\end{split}
\end{equation*}
Here $a_{i-\frac12}=a(x_i - h/2)$ or  $a_{i - \frac12}=\frac{1}{h}\int_{x_{i-1}}^{x_i} a(x) dx $. Note that the former is 
the standard finite difference approximation obtained from 
the balanced method (see, e.g. \cite[pp. 155--157]{samarskii2001theory}), 
while the latter is a result of finite element method with mass lumping, see Subsection \ref{ss:FEM}.

Then the finite difference approximation in this case is the matrix equation \eqref{fda} with 
\begin{equation}\label{FD-matrix-1D-k} 
\wcalAt= \frac{1}{h^2} \left[
\begin{array}{ccccc} a_{\frac12}+ a_{\frac32}& -  a_{\frac32}  &&&\\
 -  a_{\frac32} &  a_{\frac32} +  a_{\frac52} & -  a_{\frac52}&&\\
\cdots &\cdots &\cdots &\cdots &\cdots\\ 
& -  a_{i-\frac12}& a_{i-\frac12} +  a_{i+\frac12} & a_{i+\frac12} &\\\cdots &\cdots &\cdots &\cdots  &\cdots\\
&&& - a_{N-\frac12} &  a_{N-\frac12} + a_{N +\frac12}
\end{array}\right], \ \ 
  \cI_h f =\tilfh = \left[
\begin{array}{c}
 f(x_1) \\
 f(x_2) \\
 \dots  \\
 f(x_i) \\
\dots \\
 f(x_N)
\end{array} \right ] .
\end{equation}
The eigenvalues $\lambda_i$ of the matrix $\calAt $ satisfy 
$$ 
4 \pi^2 \min_x a(x) \le \lambda_i \le  4 \max_x a(x)/h^2,  \ \  i=1, \dots, N.
$$

\paragraph{\it Example 2}
The next example is for problem \eqref{strong}
on $\Omega=(0,1) \times (0,1)$ on a $(n+1) \times (n+1)$ square mesh.
The standard 5-point stencil finite difference approximation of the Laplace operator gives the 
matrix  $\wcalAt \in \R^{N \times N}$, $N=n^2$, that has the
following block stricture (here $ \wcalAt_{i,i} \in \R^{n \times n}$, $i=1, \cdots, n $ 
and $\wcalIt_n$ is the identity matrix in $\R^n$) 
\begin{equation}\label{FD-matrix} 
\wcalAt=
(n+1)^2 \left[\begin{array}{ccccc} \wcalAt_{1,1} & -\wcalIt_n &&&\\ -\wcalIt_n & \wcalAt_{2,2} & -\wcalIt_n 
&&\\\cdots &\cdots &\cdots &\cdots &\cdots\\ & -\wcalIt_n & \wcalAt_{i,i} & -\wcalIt_n &\\\cdots &\cdots &\cdots &\cdots 
&\cdots\\&&& -\wcalIt_n & \wcalAt_{n,n}\end{array}\right],\quad \wcalAt_{i,i}=\left[\begin{array}{cccc} 
4&-1&&\\-1&4&-1&\\\cdots&\cdots&\cdots&\cdots\\&-1&4&-1\\&&-1&4\end{array}\right].
\end{equation}

This matrix could be obtained by the finite element method applied to triangular meshes that 
generated on triangulations obtained by splitting each rectangle into two 
triangles (by connecting the lower left vertex with the upper right one)  
and using the ``lumped" mass inner product \eqref{mass-lumping}.
Since the mesh is square, all diagonal elements of $ {\wcalMt}^{-1}_h$ are equal to 
$ h^{-2}=(n+1)^{2}$.
Then the operator $  {\calAt}: V_h \to V_h$ is defined as 
$ ({\calAt} u_h, v)_h = a(u_h,v) $ has a matrix representation 
 $\wcalAt= {\wcalMt}_h^{-1} {\wcalSt}$,
see also \cite[Chapter 4, p.~203--205]{ciarlet2002}.

\begin{remark}\label{r:FD}
We note that on an uniform mesh with step-size $h=1/(N+1)$  the matrix  \eqref{FD-matrix}
has the following extreme eigenvalues:
$$
\lambda_1 = 8(n+1)^2 \sin^2 \frac{\pi}{2(n+1)} \approx 2 \pi^2, \ \ 
\lambda_{n^2} = 8(n+1)^2 \sin^2 \frac{\pi n}{2(n+1)} \approx 8 (n+1)^2 = 8h^{-2}.
$$
\end{remark}

\paragraph{\it Example 3} We finally consider the lumped mass 
approximation to the one-dimensional equation  
$-\Delta u := - u^{\prime\prime} =f(x),$ $ u(0)=0, \ u(1)=0, \ $ for $ 0<x<1$.
We use an arbitrary
nonuniform grid $0=x_0 < x_1 < \dots < x_N < x_{N+1}=1$.  This results in 
\begin{equation}\label{FD-matrix-1D} 
\calSt= \left[
\begin{array}{ccccc} \frac{1}{h_1}+\frac{1}{h_2} & -\frac{1}{h_2} &&&\\
 - \frac{1}{h_2} & \frac{1}{h_2} + \frac{1}{h_3} & - \frac{1}{h_3}&&\\
\vdots &\vdots &\vdots &\vdots &\vdots\\ 
& -\frac{1}{h_i} &  \frac{1}{h_{i}} + \frac{1}{h_{i+1}} & -\frac{1}{h_{i+1}} &\\
\vdots &\vdots &\vdots &\vdots  &\vdots\\
&&& -\frac{1}{h_N} & \frac{1}{h_N} +\frac{1}{h_{N+1}} 
\end{array}\right], \ \ 
  \tilfh = \left[
\begin{array}{c}
\widetilde h_1 f(x_1) \\
\widetilde h_2 f(x_2) \\
 \vdots  \\
\widetilde h_i f(x_i) \\
\vdots \\
\widetilde h_{N+1} f(x_N)
\end{array} \right ] .
\end{equation}
where $h_i=x_i -x_{i-1}$ and $\widetilde h_i = \frac12(h_{i+1}+h_i)$.
This is the standard finite difference approximation on this mesh, see \cite[pp. 155--157]{samarskii2001theory},
but does not fit into the earlier discussion of the finite difference case.

\subsection{Numerical tests: set up for comparison with other methods}
\label{sec:section4}

The goal of the numerical tests is to see how the accuracy of 
the computations of various methods is affected by 
the main factors, namely, $\alpha \in (0,1)$, the smoothness 
of the solution $u$, the degree of the polynomials $k$, and 
the mesh-size $h$. Note that for a general mesh, 
with $h_*=\min h$,  the matrix $ \wcalAt$ has spectral 
condition  number $ \kappa(\wcalAt) =O(h_*^{-2})$.

Our first numerical tests are based on 5-point 
finite difference approximation  of the 2-D fractional Laplacian on a uniform square mesh
in $\Omega:=[0,1]\times[0,1]$ with zero Dirichlet boundary conditions. To generate 
solutions with different smoothness   
we use two different right hand sides, namely, $f_1$ and $f_2$ (see, Example 1 and 2 below). 
The vectors $ \cI_h f_1 $ and $\cI_h f_2$ representing the data for the linear system 
\eqref{fda} are obtained by evaluating the functions $f_1$ and $f_2$ at the 
mesh points taken in lexicographical order. 
At the point of discontinuity, the values are taken to be zero.
\paragraph{\it Example 1} The right hand side  $f_1(x,y)$,  used also in the numerical tests in \cite{BP15,HLMMV18}, 
is piece-wise constant function ({\em CheckerBoard}), which has 
jump discontinuities along 
the lines $x=0.5$ and $y=0.5$
\begin{equation}\label{eq:rhs}
f_1(x,y)=\left\{
                    \begin{array}{rl} 
                    1,& \text{if } (x-0.5)(y-0.5)>0,\\
                    -1,& \text{if } (x-0.5)(y-0.5)<0.
                    \end{array}\right.\quad
\end{equation}
As $f_1$ is not continuous, $\cI_h f_1$ is not well defined.  Instead,
we set $\tilfh(\bx_j)=0 $ at points of discontinuity and
$\tilfh(\bx_j)=f(\bx_j)$ otherwise.  Here $ \{\bx_j\}$ are the interior
nodes of the finite element mesh.
A discrete reference solution of $\tiluh=\wcalAt^{-\alpha}\cI_h f_1$, where $\calAt$ is as in \eqref{FD-matrix}, 
has been computed using FFT techniques on a uniform mesh with mesh-size $h=2^{-15}$. 

\paragraph{\it Example 2} Now we consider smooth data $f_2(x,y) = \sin(2\pi x) \sin(2\pi y)$. Since 
$f_2$ is an eigenfunction of both the Laplace and the discrete Laplace operators,  the exact discrete solution 
on the uniform mesh with mesh-size $h$ is
$$
\tiluh =\wcalAt^{-\alpha}\cI_h f_2=\left(\frac{8\sin^2(\pi h)}{h^2}\right)^{-\alpha}\cI_h f_2.
$$
Together with the two BURA-related solvers (BURA and P-BURA), 
we apply also the method, proposed by Bonito and Pasciak in \cite{BP15} that
incorporates an exponentially convergent quadrature scheme for 
approximation of integral representation of the solution \eqref{bal}
\begin{equation*}
t^{-\alpha} \approx Q_\alpha(t):=\frac{2k'\sin(\pi\alpha)}{\pi}\sum_{\ell=-m}^M 
\frac{e^{2(\alpha-1)\ell k'}}{t+e^{-2\ell k'}},\qquad t\in(0,\infty), 
\end{equation*}
where $m=\lceil (1-\alpha)k\rceil$, $M=\lceil \alpha k\rceil$, $k'=\pi/(2\sqrt{\alpha(1-\alpha)k})$. 
Note that $Q_\alpha$ is in the class of rational functions  $\rat {k+1}$ or $\rat {k+2}$. 
In particular, for $k=7$, $Q_\alpha \in \rat 9$ when $\alpha=\{0.25,0.5,0.75\}$. 
The approximate solution is of the form
\begin{equation}\label{eq:Q-method}
\tilde u_{h,Q}:=\frac{2k'\sin(\pi\alpha)}{\pi}\sum_{\ell=-m}^M 
e^{2(\alpha-1)\ell k'}\left(\calAt+e^{-2\ell k'} \calIt \right)^{-1}f_h . 
\end{equation}
The parameter $k'>0$ controls the accuracy of $u_{h,Q}$ and the number of linear systems to be solved. 
For example, $k'=1/3$ gives rise to 120 systems for $\alpha=\{0.25,0.75\}$ and $91$ systems for $\alpha=0.5$ 
guaranteeing  $\| \tilde u_{h,Q}- \tilde u_h \|_{\ell_2} \approx 
10^{-7} \| f_h \|_{\ell_2}$. We will refer to such a parameter choice as {\em the $k^\prime$-Q-method}. 
On the other hand, taking $k=7$ we need to solve nine linear systems in order to derive $u_{h,Q}$, and this 
will be called {\em the Q-method}.
Although the theoretical foundation of the Q-method is quite different from the 
one of the BURA-related methods, both of the approaches are computationally very similar and this 
allows us to perform a meaningful comparison analysis.

 \subsection{Numerical tests for uniform mesh} \label{ss:uniform} 
 Now we analyze the computational results from the efficiency point of view. 
We fix the number $k$ in such a way that  the three methods, BURA, P-BURA and Q-method, 
require $9$ systems of the type  $(\wcalIt - d_i  \wcalAt )  \tilwh= \tilvh$ to be solved. 
This means that all three methods  need almost the same amount of computational work. 
For comparison, we also give the results of the $k^\prime$-Q-method  that has the best 
accuracy, but requires about 10 - 15 times more computational work.

Tables \ref{tab:2D results 1} and  \ref{tab:2D results 2} 
present the computational results for the four  solvers discussed
above for three values of $\alpha=0.25, 0.5, 0.75$ requiring a number of solves as
discussed above. Together with the $\ell_2$-norm of the error we also provide (just for
comparison purposes) the error measured on the maximum norm $\|\cdot \|_{\ell_\infty}$.

The first general comment is that all methods work according to the developed theory. In terms of 
efficiency the P-BURA method  seems to be the best. In agreement with the theory, the accuracy of the 
method does not depend on the mesh size $h$. Also, in agreement with the approximation error 
reported in Table \ref{t:error} its error decreases when $\alpha$ increases. But even in the worst
approximation, the case when $\alpha=0.25$, P-BURA produces a reasonable error in the range of $10^{-4}$
when using only 9 system solves.  Moreover, for a fixed mesh of  medium mesh-size (say, $h=10^{-8} - 10^{-9}$) 
and $\alpha=0.25$ P-BURA is as accurate as 
BURA method and outperforming BURA and Q-method on all meshes for $\alpha=0.5$ and $\alpha=0.75$ 
on both Problem 1 and Problem 2.
In contrast, the $k^\prime$-Q-method has the same accuracy,
but needs 120 system solves. 

Second, we note that from the first row of Table \ref{t:error} we see that the BURA approximation for 
$t^{1-\alpha}=t^{0.75}$  has good accuracy for relatively small values of $k$.
We see that for $k=8$ the error ranges from $4.4\times 10^{-4}$ for $\alpha=0.75$ to $1.2 \times 10^{-6}$ for $\alpha=0.25$.
However, the computational results on Tables  \ref{tab:2D results 1} and  \ref{tab:2D results 2} show that
the factor $\kappa(\wcalAt)^{1-\alpha} $  in the error bound for BURA method is polluting the 
approximate solution and reducing the accuracy. This pollution is especially visible in the computational results
for $\alpha=0.25$. In this case $\kappa(\wcalAt)^{1-\alpha} =\kappa(\wcalAt)^{3/4}=O( h^{-3/2}) $
and  every time one halves the mesh-size the error is increased by a factor of $ 2^{3/2} \approx 3.8$.
This pollution is less visible for $\alpha=0.75$ since the factor is $2^{1/2} \approx 1.4$.
Regardless of this pollution, the BURA method with 8 system solves is, in general, more accurate 
than the Q-method for all three values of $\alpha$, when using the same number of system solves.

\begin{table}[h!]
\centering
\caption{Relative errors of the approximate solution for $\alpha=0.25,0.5,0.75$
obtained by four different methods on various uniform meshes. Each of the methods
BURA, P-BURA, and  Q-method uses $9$ linear systems, while the  $k^\prime$-Q-method, 
$k^\prime=1/3$, uses $120$ linear system solves for $\alpha=0.25,0.75$ and  $91$ solves
 for $\alpha=0.5$. The reference solution is the discrete solution
on a mesh with step-size $h=2^{-15}$ computed via FFT.}\label{tab:2D results 1}
 \begin{tabular}{|c|c|cc|cc|cc|cc|}
\hline
\multirow{3}{*}{$\alpha$} & \multirow{3}{*}{$h$} & \multicolumn{8}{|c|}{{\it Example 1}, CheckerBoard right-hand-side}\\ \cline{3-10}
& & \multicolumn{2}{|c|}{BURA} & \multicolumn{2}{|c|}{P-BURA} & \multicolumn{2}{|c|}{Q-method} & \multicolumn{2}{|c|}{$k'$-Q-method}\\
& &  $\ell_2$ & $\ell_\infty$ &  $\ell_2$ & $\ell_\infty$ &  $\ell_2$ & $\ell_\infty$ &  $\ell_2$ & $\ell_\infty$ \\ \hline
\multirow{5}{*}{$0.25$} & 
  $2^{-8}$  & 2.929e-4 & 2.612e-3 & 3.255e-4 & 2.550e-3 & 1.045e-2 & 1.288e-2 & 2.772e-4 & 2.612e-3 \\
& $2^{-9}$  & 1.747e-4 & 1.847e-3 & 2.292e-4 & 1.875e-3 & 1.040e-2 & 1.207e-2 & 1.371e-4 & 1.847e-3 \\
& $2^{-10}$ & 8.217e-4 & 1.829e-3 & 2.029e-4 & 1.339e-3 & 1.039e-2 & 1.152e-2 & 6.815e-5 & 1.305e-3 \\
& $2^{-11}$ & 5.077e-3 & 1.094e-2 & 1.939e-4 & 8.219e-4 & 1.038e-2 & 1.097e-2 & 3.388e-5 & 9.196e-4 \\
& $2^{-12}$ & 1.129e-2 & 2.610e-2 & 1.922e-4 & 7.451e-4 & 1.038e-2 & 1.069e-2 & 1.671e-5 & 6.413e-4 \\\hline
\multirow{5}{*}{$0.50$} & 
  $2^{-8}$  & 9.688e-5 & 1.900e-4 & 2.212e-5 & 1.849e-4 & 2.847e-3 & 2.910e-3 & 2.331e-5 & 1.821e-4 \\
& $2^{-9}$  & 2.337e-4 & 4.995e-4 & 1.013e-5 & 8.787e-5 & 2.835e-3 & 2.904e-3 & 8.058e-6 & 9.110e-5 \\
& $2^{-10}$ & 3.828e-4 & 8.616e-4 & 8.304e-6 & 4.742e-5 & 2.830e-3 & 2.902e-3 & 2.840e-6 & 4.559e-5 \\
& $2^{-11}$ & 2.413e-4 & 6.274e-4 & 8.263e-6 & 2.433e-5 & 2.829e-3 & 2.902e-3 & 1.033e-6 & 2.280e-5 \\
& $2^{-12}$ & 1.424e-3 & 2.814e-3 & 8.291e-6 & 1.909e-5 & 2.828e-3 & 2.902e-3 & 4.118e-7 & 1.132e-5 \\\hline  
\multirow{5}{*}{$0.75$} & 
  $2^{-8}$  & 1.219e-4 & 2.741e-4 & 2.443e-6 & 9.168e-6 & 1.507e-3 & 1.825e-3 & 2.561e-6 & 9.103e-6 \\
& $2^{-9}$  & 1.761e-4 & 3.976e-4 & 6.110e-7 & 3.110e-6 & 1.502e-3 & 1.824e-3 & 7.118e-7 & 3.263e-6 \\
& $2^{-10}$ & 2.172e-4 & 4.958e-4 & 1.884e-7 & 1.037e-6 & 1.501e-3 & 1.823e-3 & 2.355e-7 & 1.198e-6 \\
& $2^{-11}$ & 1.401e-4 & 3.478e-4 & 1.500e-7 & 6.592e-7 & 1.500e-3 & 1.823e-3 & 1.138e-7 & 4.677e-7 \\
& $2^{-12}$ & 1.803e-4 & 3.264e-4 & 1.547e-7 & 4.574e-7 & 1.499e-3 & 1.823e-3 & 8.334e-8 & 2.079e-7 \\\hline 
\end{tabular}
\end{table}

\begin{table}[h!]
\centering
\caption{Relative errors of the approximate solution for $\alpha=0.25,0.5,0.75$
obtained by four different methods on various uniform meshes. Each of the first three solvers 
incorporates $9$ linear systems, while the last solver incorporates $120$  for $\alpha=0.25,0.75$ and
$91$ system solves  for $\alpha=0.5$. On each level we have computed the exact Galerkin solution 
$\tiluh=\wcalAt^{-\alpha}\cI_h f_2=\lambda_2^{-\alpha} \cI_h \Psi_2$.
}
\label{tab:2D results 2}
 \begin{tabular}{|c|c|cc|cc|cc|cc|}
\hline
\multirow{3}{*}{$\alpha$} & \multirow{3}{*}{$h$} & \multicolumn{8}{|c|}{{\it Example 2}, $f_2(x,y) = \sin(2\pi x) \sin(2\pi y)$}\\ \cline{3-10}
& & \multicolumn{2}{|c|}{BURA} & \multicolumn{2}{|c|}{P-BURA} & \multicolumn{2}{|c|}{Q-method} & \multicolumn{2}{|c|}{$k'$-Q-method}\\
& &  $\ell_2$ & $\ell_\infty$ &  $\ell_2$ & $\ell_\infty$ &  $\ell_2$ & $\ell_\infty$ &  $\ell_2$ & $\ell_\infty$ \\ \hline
\multirow{5}{*}{$0.25$} & 
  $2^{-8}$  & 4.615e-5 & 9.193e-5 & 1.086e-4 & 2.162e-4 & 5.223e-3 & 1.040e-2 & 1.955e-6 & 3.894e-6 \\
& $2^{-9}$  & 6.035e-5 & 1.205e-4 & 1.067e-4 & 2.131e-4 & 5.214e-3 & 1.041e-2 & 3.688e-7 & 7.362e-7 \\
& $2^{-10}$ & 4.993e-4 & 9.976e-4 & 1.062e-4 & 2.123e-4 & 5.209e-3 & 1.041e-2 & 2.663e-8 & 5.321e-8 \\
& $2^{-11}$ & 3.122e-3 & 6.240e-3 & 1.061e-4 & 2.121e-4 & 5.207e-3 & 1.041e-2 & 1.253e-7 & 2.506e-7 \\
& $2^{-12}$ & 6.904e-3 & 1.380e-2 & 1.060e-4 & 2.120e-4 & 5.206e-3 & 1.041e-2 & 1.500e-7 & 2.999e-7 \\\hline   
\multirow{5}{*}{$0.50$} & 
  $2^{-8}$  & 6.388e-5 & 1.273e-4 & 5.703e-6 & 1.136e-5 & 1.428e-3 & 2.845e-4 & 1.329e-6 & 2.648e-6 \\
& $2^{-9}$  & 1.426e-4 & 2.846e-4 & 4.630e-6 & 9.243e-6 & 1.426e-3 & 2.847e-3 & 2.650e-7 & 5.290e-7 \\
& $2^{-10}$ & 2.360e-4 & 4.715e-4 & 4.361e-6 & 8.713e-6 & 1.425e-3 & 2.847e-3 & 3.273e-10& 6.539e-10\\
& $2^{-11}$ & 1.469e-4 & 2.936e-4 & 4.292e-6 & 8.580e-6 & 1.424e-3 & 2.847e-3 & 6.656e-8 & 1.331e-7 \\
& $2^{-12}$ & 8.771e-4 & 1.754e-3 & 4.275e-6 & 8.547e-6 & 1.424e-3 & 2.848e-3 & 8.310e-8 & 1.662e-7 \\\hline
\multirow{5}{*}{$0.75$} & 
  $2^{-8}$  & 7.299e-5 & 1.454e-4 & 7.564e-7 & 1.507e-6 & 8.331e-4 & 1.660e-3 & 6.733e-7 & 1.341e-6 \\
& $2^{-9}$  & 1.086e-4 & 2.168e-4 & 2.208e-7 & 4.408e-7 & 8.320e-4 & 1.661e-3 & 1.379e-7 & 2.753e-7 \\
& $2^{-10}$ & 1.332e-4 & 2.662e-4 & 8.724e-8 & 1.743e-7 & 8.314e-4 & 1.661e-3 & 4.375e-9 & 8.742e-9 \\
& $2^{-11}$ & 8.546e-5 & 1.708e-4 & 5.387e-8 & 1.077e-7 & 8.310e-4 & 1.661e-3 & 2.896e-8 & 5.789e-8 \\
& $2^{-12}$ & 1.110e-4 & 2.219e-4 & 4.553e-8 & 9.103e-8 & 8.308e-4 & 1.661e-3 & 3.728e-8 & 7.454e-8 \\\hline
\end{tabular}
\end{table}

\subsection{Numerical tests on locally refined mesh} 
\label{ss:non-uni}
Since the P-BURA error estimate \eqref{pbest-fd} is independent of
the condition number of the discretization matrix $\wcalAt$, one can also 
apply local refinement techniques for efficiently capturing the solution 
behavior around possible singularities of the solution $ u $. In this section we 
illustrate the advantages of such an approach, considering one-dimensional example for 
which the exact continuous solution of the fractional diffusion problem is 
analytically known. We always perform geometric dyadic refinement around the 
singularities and apply the P-BURA method as a solver.

The eigenfunctions and the eigenvalues of the 1-dimensional  problem \eqref{strong} are
\begin{equation}\label{eq:1DLaplace}
\psi_i(x)=\sqrt{2}\sin(\pi ix);\qquad \mu_i=i^2\pi^2,\qquad \forall i\in\NN.                                                                                                               
\end{equation}
Note that with respect to the standard $L^2[0,1]$ inner product, 
we have $(\psi_i,\psi_j)=\delta_{ij}$ for all $i,j\in\NN$. Therefore, 
for any right-hand side function $f$ on $(0,1)$ we can explicitly 
compute the solution of the continuous fractional diffusion problem 
with homogeneous boundary conditions
\begin{equation}\label{eq:1Dexpand}
u(x) :=\calA^{-\alpha}f=\sum_{i=1}^\infty \mu^{-\alpha}_i(f,\psi_i)\psi_i(x). 
\end{equation}
Furthermore, we have $\lambda_1=\pi^2 > 1$ so for all meaningful grids on $[0,1]$ 
(e.g., coming from finite element or finite difference discretization) the first 
eigenvalue $\lambda_1$ of the corresponding discrete operator $\calAt$ satisfies 
$\lambda_1\ge\mu_1>1$. Thus the spectrum of $\calAt$ is always in $[1,\infty)$ 
and we do not need to normalize the matrix in order to apply the P-BURA solver.

Now we take a smooth function, namely, $f(x) =1$, but the solution of 
\eqref{frac-eq} will exhibit boundary layers near the end-points $x=0$ and $x=1$.
Those layers are steeper as 
$\alpha\to 0$ and in order for the numerical solver to correctly capture them, 
we need very fine mesh near the boundary, especially for small $\alpha$. 
Thus, we consider the following class of locally refined meshes: Firstly, we 
start with a uniform mesh of size $h_0$. Then, at each refinement step we take 
the first and the last segments (those that have a boundary point at $0$, 
respectively a boundary point at $1$) and subdivide them on $p$ equal parts, 
introducing $p-1$ new mesh nodes per segment. 

Direct computations give rise to 
$$
(f,\psi_i)=\int_0^1\sqrt{2}sin(\pi ix)dx=\left\{\begin{array}{rl} 0, 
        & \text{if $i$ is even};\\\frac{2\sqrt{2}}{i\pi}, & \text{if $i$ is odd}.\end{array}\right.
$$
and due to \eqref{eq:1Dexpand} we have the explicit representation of the exact solution
\begin{equation}\label{sol:1Done}
u(x)=\frac{2\sqrt{2}}{\pi^{1+2\alpha}}\sum_{i=0}^\infty \frac{\psi_{2i+1}(x)}{\left(2i+1\right)^{1+2\alpha}} .
\end{equation}
As a reference solution, we consider the truncated series representation \eqref{sol:1Done} by taking 
the first $10^4$ terms. Of course, this is an approximation to the exact solution. However, the error 
of such approximations for $ \alpha =0.25, 0.5, 0.75$  are all less than $ 1.270\text{e-5}$,   
$4.136\text{e-8}$, $ 1.429\text{e-10}$, respectively.  Since these are all substantially below the 
approximation error of BURA (see, Table \ref{t:error}), we use them as substitute of the exact solution.

The accuracy of this reference solution is higher than the accuracy of the P-BURA method.
We study the relative $L_2$-error
$
{\|w_{h}-u_h\|}/{\|u\|},
$
where $w_h$ is a piece-wise linear function that interpolates the P-BURA solution on the locally refined mesh, 
while $u_h$ is a piece-wise linear function that samples $u$ on the uniform mesh with $h=2^{-18}$. 

The numerical results are summarized on Table~\ref{tab:1Dconst}. We have considered only dyadic mesh refinement (i.e., $p=2$). 
As expected, the smaller the $\alpha$ is, the steeper the boundary layers are, 
thus the bigger the benefit of the local refinement is. For example, for $\alpha=0.25$ we observe 
that starting with a uniform mesh of size $h_0=2^{-6}$ and performing 9 
additional local refinement steps, we end up 
with a numerical solution that is as accurate as the numerical solution on a uniform mesh of size $h_0=2^{-10}$. 
On the other hand, the first mesh consists of $81$ nodes, white the second one - of $1023$. Moreover, as $h_0$ 
increases the order of the error is the same as the order of the 9-BURA accuracy $E_{0.25,9}=2.654$e-4 
(see Table~\ref{t:error}), meaning that the geometrical 6-step adaptive refinement with $h_0=2^{-9}$ leads 
to almost optimal results at the numerical cost of solving nine 
tridiagonal linear systems of size $523$.  The latter is further illustrated on Fig.~\ref{fig:2D}. 
There, using that the CheckerBoard function on $[0,1]\times[0,1]$ can be split into four squared 
sub-domains, such that on each of them we solve a tensor product of two 1D problems like {\em Example 3}, 
together with the linearity of the fractional Laplace operator, we numerically compute solution of 
{\em Example 1} for $\alpha=0.25$ on a $(2\cdot523+1)\times(2\cdot523+1)=1047\times1047$ mesh, 
which is locally adapted along the boundary of the domain and the lines of discontinuity of the right-hand-side. 
On the left, we plot the computed numerical solution. In the middle we show the mesh refined around the 
central point $(0.5,0.5)$. On the right we plot the point-wise error between the numerical solution and the 
true exact solution, sampled at a uniform grid of size $h=2^{-16}$ in the same region of interest. 
Note that the region captures the boundary layers along $x=0.5$ and $y=0.5$ and the 
point-wise error is less than $2$e-4 overall.

\begin{table}[h!]
\centering
\caption{Computing $\| w_h-u_h\|/\|u\|$ for $f(x)=1$ and $\alpha=\{0.25,0.5,0.75\}$ 
on various uniform and locally refined meshes. 9-P-BURA is used as solver and all meshes have 
been iteratively refined until the smallest mesh segment is of size $2^{-15}$. 
Dyadic refinement ($p=2$) has been applied.} 
\label{tab:1Dconst}
\begin{tabular}{|c|c|c|c|c|c|c|}
\hline
 & Ref. level & $h_0=2^{-6}$ & $h_0=2^{-7}$ & $h_0=2^{-8}$ & $h_0=2^{-9}$ & $h_0=2^{-10}$ \\\hline 
\multirow{2}{*}{$\alpha=0.25$} & 
  $0$  & 1.813e-2 & 9.071e-3 & 4.544e-3 & 2.287e-3 & 1.179e-3\\
& last & 1.294e-3 & 6.931e-4 & 4.446e-4 & 3.541e-4 & 3.301e-4\\\hline   
\multirow{2}{*}{$\alpha=0.50$} & 
  $0$  & 2.705e-3 & 9.547e-4 & 3.380e-4 & 1.233e-4 & 5.498e-5\\
& last & 7.620e-4 & 2.648e-4 & 9.487e-5 & 4.563e-5 & 3.734e-5\\\hline
\multirow{2}{*}{$\alpha=0.75$} & 
  $0$  & 6.415e-4 & 1.713e-4 & 4.882e-5 & 2.158e-5 & 1.795e-5\\
& last & 4.712e-4 & 1.321e-4 & 4.054e-5 & 2.056e-5 & 1.792e-5\\\hline
$\#$ mesh & 
  $0$  & 63 & 127 & 255 & 511 & 1023\\
nodes & last & 81 & 143 & 269 & 523 & 1033\\\hline
\end{tabular}
\end{table}

\begin{figure}[b!]
\begin{tabular}{ccc}
\includegraphics[width=0.33\textwidth]{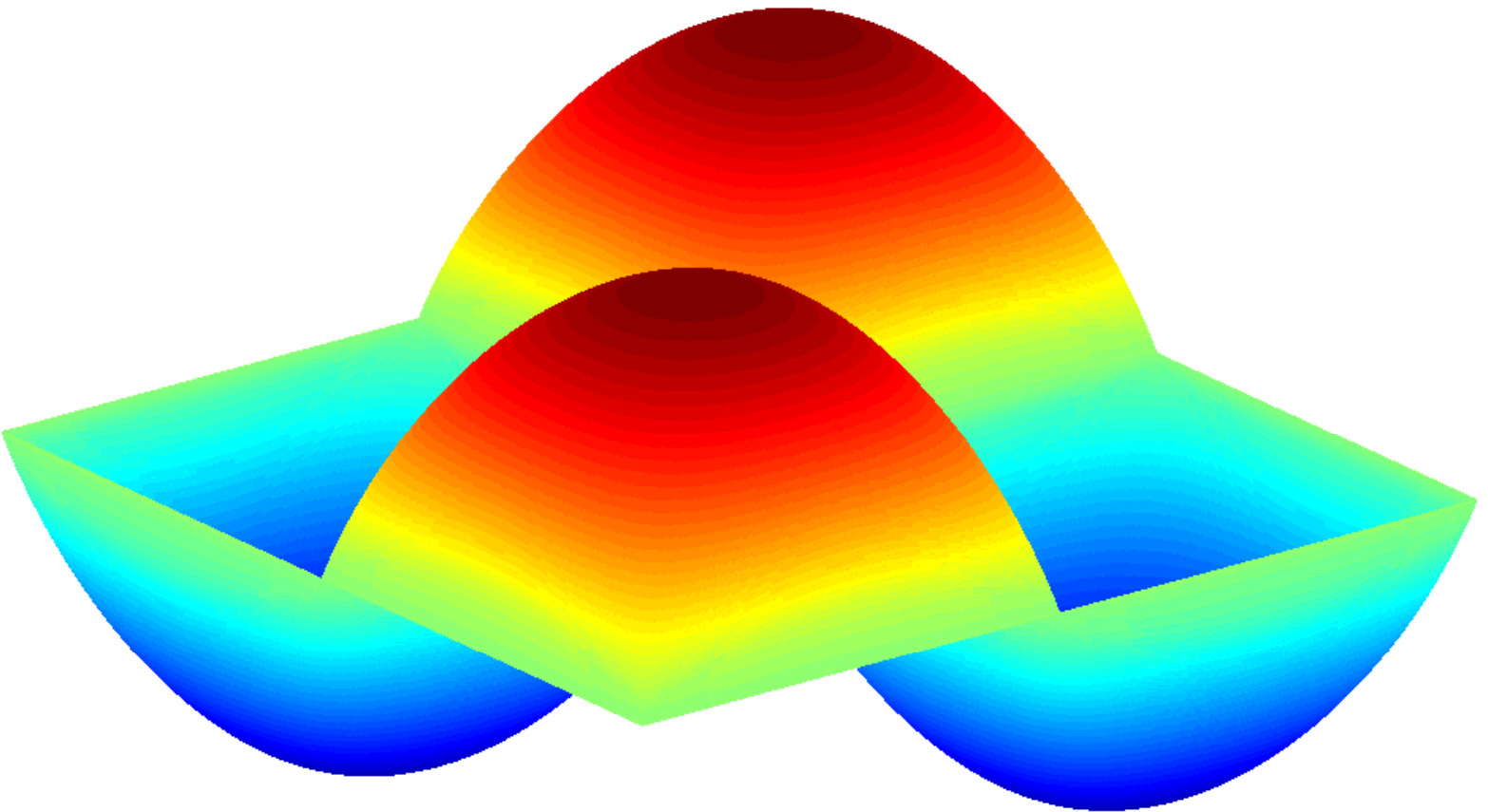} &
\includegraphics[width=0.33\textwidth]{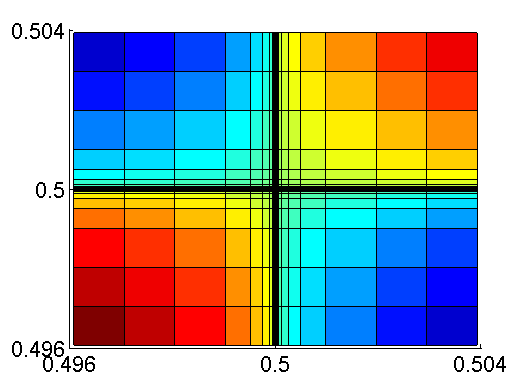} &
\includegraphics[width=0.33\textwidth]{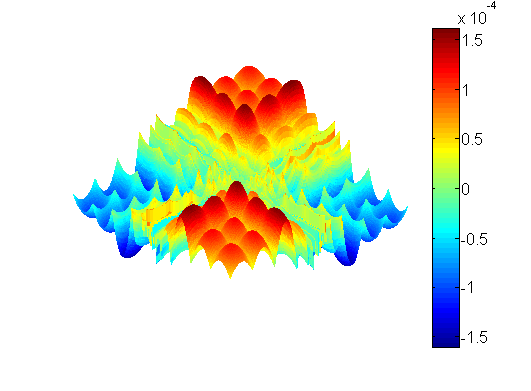}
\end{tabular}
\caption{Computing the CheckerBoard problem for $\alpha=0.25$ on a locally adaptive 2-dimensional mesh. 
Left: Tensor-product numerical solution. Center: Geometrical refinement along discontinuities: the mesh 
within the domain $\Omega^\prime=[0.496,0.504] \times [0.496,0.504]$. 
Right: Plot of the  error $(u_P-u_h)(x) $ over $\Omega^\prime$ for $h=2^{-16}$.}
\label{fig:2D}
\end{figure}

\begin{table}[h!]
\centering
\caption{Computing $\| w_h-u_h\| /\|u \|$ for $f(x)=\delta_{1/2}(x)$ 
 and $\alpha=\{0.5,0.75\}$ on various uniform and locally refined meshes. 
 P-BURA is used as solver and meshes have been iteratively refined until the 
 smallest mesh segment is of size $2^{-16}$, respectively $2^{-13}$, for 
 $\alpha=0.5$, $\alpha=0.75$. Dyadic refinement ($p=2$) has been applied.} 
\label{tab:1Dsource}
 \begin{tabular}{|c|c|c|c|c|c|c|}
\hline
 & Ref. level & $h_0=2^{-6}$ & $h_0=2^{-7}$ & $h_0=2^{-8}$ & $h_0=2^{-9}$ & $h_0=2^{-10}$ \\\hline 
\multirow{2}{*}{$\alpha=0.50$} & 
  $0$  & 9.218e-2 & 6.510e-2 & 4.610e-2 & 3.243e-2 & 2.309e-2\\
& last & 1.026e-2 & 7.975e-3 & 6.559e-3 & 5.690e-3 & 5.267e-3\\\hline
\multirow{2}{*}{$\alpha=0.75$} & 
  $0$  & 5.776e-3 & 2.882e-3 & 1.452e-3 & 7.138e-4 & 3.610e-4\\
& last & 1.456e-3 & 7.345e-4 & 3.826e-4 & 2.084e-4 & 1.423e-4\\\hline
$\#$ mesh & 
  $0$  & 63 & 127 & 255 & 511 & 1023\\
nodes & last & 83/77 & 145/139 & 269/263 & 525/519 & 1035/1029\\\hline
\end{tabular}
\end{table}

\paragraph{\it Example 4}
We consider the problem 
$
- u^{\prime\prime}=\delta_{1/2}(x)$ for $ 0<x<1, \ u(0)=u(1)=0,$
where the Dirac delta function $ \delta_{1/2}(x)$ is centered at $0.5$.
The weak formulation of this problem is: find $u\in H^1_0(0,1)$
satisfying
$$  
\int_0^1 u^\prime \phi^\prime \, dx = \int_0^1 \delta_{1/2}\phi\, dx =\phi(1/2), \Forall \phi\in H^1_0(0,1).
$$
It follows that 
$$\delta_{1/2} = \sum_{i=1}^\infty \psi_i(1/2) \psi_i(x)= \sqrt2 
\sum_{i=0}^\infty (-1)^i \psi_{2i+1}(x).$$
Hence, 
\begin{equation}\label{sol:1Dsource}
u(x)=  \sqrt{2}\pi^{-2\alpha}  \sum_{i=0}^\infty \frac{(-1)^i}{(2i+1)^{2\alpha}}\psi_{2i+1}(x) \quad \mbox{and} \quad 
\|u\|=\sqrt{2}\pi^{-2\alpha}\bigg(\sum_{i=0}^{\infty}(2i+1)^{-4\alpha}\bigg)^{1/2}. 
\end{equation}
Since for $\alpha \le 0.25$ the series $ \sum_{i=0}^{\infty}{(2i+1)^{-4\alpha}}$ does not converge,
$u(x) \in L^2(0,1)$ only for $\alpha>0.25$. 
Therefore, here we considered the cases $\alpha=0.5$ and $\alpha=0.75$ only. Since the singularity 
is at $0.5$, we start with a uniform mesh of size $h_0$, at each refinement step we take 
the two central segments (those that have a boundary point at $1/2$) and divide them in halves. 
Once the mesh is fixed, we take $\tilfh$ to be zero everywhere but in the middle, where the 
value is set to $h^{-1}_\ast$ -- the size of the segments, attached to the midpoint. 
As before, we  truncate the infinite series  \eqref{sol:1Dsource} to
produce approximation to the  solution $u$ which error is far below the error due to BURA.
In  Table~\ref{tab:1Dsource} we summarize our study of the  relative $L_2$-error: 
$
{\| w_h-u_h\|}/{\|u \|}.
$

\section{Error estimates for the semi-discrete finite element approximations}
\label{sec:lumped}

In this section, we consider finite element approximations to the
solution of the fractional problem $\calA^{\alpha} u= f$ (or equivalently $u=\calA^{-\alpha} f$).   For
simplicity, we only consider the case when the solution operator $T$
satisfies full elliptic regularity, i.e. for $f\in L^2(\Omega)$, the
solution $v=Tf$ of \eqref{weak} is in $H^2(\Omega)\cup H^1_0(\Omega)$
and satisfies
\beq
\| v \|_{H^2(\Omega)} \le c \|f\|.
\label{fullreg}
\eeq
The assumption of full regularity greatly simplifies the semi-discrete
error analysis.  Further, to avoid proliferation of various constant related to the 
maximum and minimum values of $a(x)$ in this Section we shall use the
norm generate by the bilinear form $a(\cdot, \cdot)$ which is equivalent to $H^1$:
\beq\label{a-norm}
\| u \|_a = a(u,u)^\frac12 \ \Forall u \in H^1_0(\Omega).
\eeq


\subsection{Consistent mass finite element method}
\label{ss:consistmass}
Now we consider the finite elements  method  \eqref{mat-FEM}  with consistent 
mass computation. \\

\paragraph{\it Approximation properties of the method}
To study the approximation properties of $u_h$, we follow the technologies 
developed in \cite {fujitaSuzuki, Ushijima1979} and include the 
proof for completeness.

\begin{theorem} \label{FS-thm} (Fujita-Suzuki, \cite[Theorem 5.2, p.~806]{fujitaSuzuki}) 
Suppose that \eqref{fullreg} holds.
  Then for $f\in L^2(\Omega)$, 
$$\|\calA^{-\alpha} f -\calAt^{-\alpha} \pi_h f \| \le C h^{2\alpha} \|f\|$$
with $C$ not depending on $h$.  Here $\calAt$ denotes the finite
element operator without lumping, i.e., that appearing in \eqref{fea}.
\end{theorem}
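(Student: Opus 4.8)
The plan is to represent both fractional powers through the Balakrishnan integral \eqref{bal} and to reduce the whole estimate to a $\mu$-dependent error bound for the finite element approximation of the shifted, positive definite elliptic problem $(\mu\calI+\calA)w=f$. Since $\calAt$ is symmetric and positive definite on $V_h$, the representation \eqref{bal} applies verbatim to $\calAt^{-\alpha}\pi_h f$, so subtracting the two integrals gives
\beq
\calA^{-\alpha}f-\calAt^{-\alpha}\pi_h f=\frac{\sin(\pi\alpha)}{\pi}\int_0^\infty \mu^{-\alpha}\,e(\mu)\,d\mu ,
\eeq
where $e(\mu):=(\mu\calI+\calA)^{-1}f-(\mu\calIt+\calAt)^{-1}\pi_h f$. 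The task then becomes to estimate $\|e(\mu)\|$ and to integrate it against $\mu^{-\alpha}$; the exponent $2\alpha$ will emerge by balancing two regimes in $\mu$.

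First I would record two facts about the shifted problem. Writing $w=(\mu\calI+\calA)^{-1}f$, its weak form reads $a_\mu(w,\theta):=\mu(w,\theta)+a(w,\theta)=(f,\theta)$ for all $\theta\in V$, and $w_h=(\mu\calIt+\calAt)^{-1}\pi_h f$ is exactly its Galerkin approximation in $V_h$ (the $L^2$ projection dropping out since test functions lie in $V_h$). Testing with $w$ gives $\mu\|w\|^2\le\|f\|\,\|w\|$, hence $\|w\|\le\mu^{-1}\|f\|$; combined with $\calA w=f-\mu w$ and the full regularity \eqref{fullreg} this yields the \emph{uniform} bound $\|w\|_{H^2}\le C\|f\|$, independent of $\mu$. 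Since also $\|w_h\|\le\mu^{-1}\|\pi_h f\|\le\mu^{-1}\|f\|$, the triangle inequality produces the crude decay estimate
\beq\label{largemu}
\|e(\mu)\|\le 2\mu^{-1}\|f\|\qquad\text{for all }\mu>0 .
\eeq

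For the complementary regime I would carry out the standard energy and Aubin--Nitsche duality argument while tracking the dependence on $\mu$. Since $a_\mu$ is symmetric and coercive, Galerkin orthogonality together with the interpolation bounds $\|w-\calI_h w\|\le Ch^2\|w\|_{H^2}$ and $\|w-\calI_h w\|_a\le Ch\|w\|_{H^2}$ yields $\|w-w_h\|_{a_\mu}^2\le C h^2(\mu h^2+1)\|w\|_{H^2}^2$, where $\|v\|_{a_\mu}^2=\mu\|v\|^2+\|v\|_a^2$. Solving the dual problem $(\mu\calI+\calA)\phi=w-w_h$, for which $\|\phi\|_{H^2}\le C\|w-w_h\|$ holds uniformly in $\mu$, and invoking orthogonality once more gives $\|e(\mu)\|=\|w-w_h\|\le C h^2(\mu h^2+1)\|w\|_{H^2}\le Ch^2(\mu h^2+1)\|f\|$. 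In particular, for $\mu\le h^{-2}$ this reduces to the clean bound $\|e(\mu)\|\le Ch^2\|f\|$.

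Finally I would split the integral at $\mu_0=h^{-2}$, using the $L^2$ bound on $(0,h^{-2}]$ and the decay \eqref{largemu} on $[h^{-2},\infty)$:
\beq
\int_0^\infty \mu^{-\alpha}\|e(\mu)\|\,d\mu\le C\|f\|\Big(h^2\int_0^{h^{-2}}\mu^{-\alpha}\,d\mu+\int_{h^{-2}}^\infty \mu^{-1-\alpha}\,d\mu\Big).
\eeq
Since $\alpha\in(0,1)$, both integrals converge and each equals a constant depending only on $\alpha$ times $h^{2\alpha}$, which establishes the claim. I expect the main obstacle to be the honest bookkeeping of the $\mu$-dependence in the finite element estimate: one must verify that the regularity bound $\|w\|_{H^2}\le C\|f\|$ is genuinely uniform in $\mu$ and that the duality step does not quietly lose a power of $\mu$. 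The factor $(\mu h^2+1)$ is precisely what makes the split at $\mu_0=h^{-2}$ produce $h^{2\alpha}$ in both pieces, so it must be retained throughout rather than crudely bounded.
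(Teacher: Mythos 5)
Your proposal is correct and follows essentially the same route as the paper's proof: the Balakrishnan representation \eqref{bal}, energy plus Aubin--Nitsche duality estimates for the shifted problem $(\mu\calI+\calA)w=f$ with the uniform-in-$\mu$ regularity bound, the crude decay estimate $\|e(\mu)\|\le 2\mu^{-1}\|f\|$, and the split of the $\mu$-integral at $h^{-2}$ yielding $h^{2\alpha}$ from both pieces. The only difference is bookkeeping: the paper bounds the $L^2$ interpolation error through $\|w_\mu\|_a\le\mu^{-1/2}\|f\|$ to obtain the $\mu$-uniform energy estimate $\|e_\mu\|_\mu\le ch\|f\|$, whereas you carry the factor $(\mu h^2+1)$ via uniform $H^2$ regularity; both collapse to the same bounds on the two subintervals.
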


\begin{proof}  Using the Balikrishnan formula \eqref{bal} gives
\beq
\calA^{-\alpha} f -\calAt^{-\alpha} \pi_h f =\frac {\sin(\pi \alpha)} \pi 
\int_0^\infty \mu^{-\alpha} [ w_\mu-w_{\mu,h}]
\, d\mu.
\label{bald}
\eeq
where 
$$
w_\mu:=(\mu \calI +\calA)^{-1}f  \hbox{ and } w_{\mu,h} =(\mu \calI +\calAt)^{-1}\pi_h f.
$$
We clearly have that $w_\mu\in H^1_0(\Omega)$ is the unique solution of 
\beq
\mu (w_\mu,\theta)+a(w_\mu,\theta)=(f,\theta)\qquad \Forall \theta\in
H^1_0(\Omega)
\label{wt}
\eeq
and  $w_{\mu,h} \in V_h$ 
 is the unique solution of 
\beq
\mu (w_{\mu,h},\theta)+a(w_{\mu,h},\theta)=(f,\theta)\qquad \Forall \theta\in
V_h.
\label{wth}
\eeq
Now, taking $\theta=w_\mu$ in \eqref{wt} and applying the Schwarz
inequality to the right hand side implies   that
$$\|w_\mu\|\le \mu^{-1} \|f\|$$
and subsequently
\beq
\|w_\mu\|_a \le \mu^{-1/2} \|f\|.
\label{onen}
\eeq
Here $\|\cdot\|_a:=a(\cdot,\cdot)^{1/2}$ denotes the $a$-norm on
$H^1_0(\Omega)$.  Let $e_\mu=w_\mu-w_{\mu,h}$.  Using \eqref{onen},
Galerkin orthogonality and standard error estimates for finite element
approximation 
gives, for all $\chi\in V_h$,
$$\bal
\mu \|e_\mu\|^2 + a(e_\mu,e_\mu) &= \mu (e_\mu,w_\mu-\chi) + a(e_\mu,w_\mu-\chi) \\
&\le c h \big [\mu^{1/2} \|e_\mu\| + \|e_\mu\|_a\big ] \|f\|.
\eal$$
A simple application of the arithmetic-geometric mean inequality implies 
\beq  \|e_\mu\|_\mu:= (\mu \|e_\mu\|^2 + a(e_\mu,e_\mu))^{1/2}\le c h \|f\|.
\label{etsum}
\eeq
We apply finite element duality defining $z\in H^1_0(\Omega)$ to be the
solution of
$$\mu (\theta,z)+a(\theta,z)=(\theta_,e_\mu),\Forall \theta\in
H^1_0(\Omega),$$
so that  Galerkin orthogonality implies 
$$\bal \|e_\mu\|^2 &=  \mu (e_\mu,z) + a(e_\mu,z) 
= \mu (e_\mu,z-\chi) + a(e_\mu,z-\chi) ,\Forall \chi\in V_h.
\eal$$
Now the Schwarz inequality, \eqref{etsum} 
 and arguments leading to \eqref{etsum} applied to $e^z_\mu:=z-\chi$ gives
$$\|e_\mu\|^2\le \|e_\mu\|_\mu \|e_\mu^z\|_\mu\le
c h^2 \|f\|  \|e_\mu\|$$
and so
\beq
\|e_\mu\|\le ch^2 \|f\|.
\label{et1}
\eeq

Now, taking $\theta=w_\mu$ in \eqref{wt} and $\theta = w_{\mu,h}$ in
\eqref{wth} gives
$$ \|w_\mu\|\le \mu^{-1} \|f\| \hbox{ and }  \|w_{\mu,h}\|\le \mu^{-1}
\|f\| $$
and so
\beq
\|e_\mu\|\le 2\mu^{-1} \|f\|.
\label{et2}
\eeq
Using the above estimates in \eqref{bald} gives
$$\bal 
\|\calA^{-\alpha} f &-\calAt^{-\alpha} \pi_h f\| \le \frac {\sin(\pi \alpha)} \pi 
\bigg[ \int_0^{h^{-2}} \mu^{-\alpha} \| e_\mu \| \, d\mu
+\int_{h^{-2}}^\infty 
\mu^{-\alpha}\| e_\mu
  \| \, d\mu \bigg]\\
&\le  
 c\|f\|\bigg[ \int_0^{h^{-2}}\mu^{-\alpha}  h^2   
\, d\mu  +\int_{h^{-2}}^\infty \mu^{-1-\alpha}  \, d\mu \bigg] \le c h^{2\alpha} \|f\|.
\eal
$$
This completes the proof of the theorem.
\end{proof}

\paragraph{\it Positivity of the approximate solution}
Note that diffusion problem \eqref{strong} is nonnegative if $f$ is nonnegative. 
This property is retained by the finite difference approximation of the problem.
In the case of finite element method  \eqref{classic-FEM} we see that 
if $\calSt$ is an $M$-matrix, then for $f \ge 0$ and consequently 
have ${\widetilde F}  \ge0$  and 
$ \tiluh \ge 0$, i.e. the finite element method 
preserves positivity.

Next, we ask the question whether the finite element solution  \eqref{mat-FEM} 
of the sub-diffusion problem \eqref{frac-eq} retains this property. 
Obviously, the solution \eqref{mat-FEM} can be expressed  by \eqref{bal}
$$
\tiluh 
= \wcalAt^{-\alpha} \wcalMt^{-1} \tilF = \frac {\sin(\pi \alpha)} \pi 
\int_0^\infty \mu^{-\alpha} (\mu \wcalMt +\calSt)^{-1}\, d\mu \ \tilF
$$
Since $\wcalMt$ is not an $M$-matrix, from this representation one can conjecture that 
even if $\calSt$ is an $M$-matrix, 
$ \wcalMt \wcalAt^{\alpha} $ could fail to be an $M$-matrix and the finite element scheme
may be not positivity preserving. For this, 
we have made some 
direct computations of the entries of the matrix $\wcalMt \wcalAt^{\alpha} $ for the case of one-dimensional
problems for various $\alpha$ and  step-sizes $h=1/(N+1)$. 
In this case the  $N \times N$ matrix $\calSt$ is defined by \eqref{FD-matrix-1D} (with $h_i=h$) and 
 $\wcalMt=\frac{h}{6}diag(1, 4, 1) $ (a tridiagonal matrix). 

In Table \ref{t:positivity} we present the following information regarding 
 the matrix $\wcalMt \wcalAt^{\alpha} $ for various 
$\alpha$ and step-size $h$: in columns MrowS, we report the 
maxim row-sum and in columns MoffD, we report the maximum of all 
off-diagonal elements. We see that all row sums are positive. 
It is clear that if the maximal off-diagonal element 
is positive, then the matrix is NOT an $M$-matrix and therefore we cannot conclude 
positivity in this case. From this table, we also see that for $\alpha \ge 0.3$ the 
matrix $\wcalMt \wcalAt^{\alpha} $ has all off-diagonal entries negative, thus 
it is an $M$-matrix and consequently the scheme will preserve positivity.

\begin{table}[ht!]
\caption{ The maximum row-sum (MrowS) and largest off-diagonal entries (MoffD) of matrix  $\wcalMt \wcalAt^{\alpha}$ for the 
one-dimensional problem.}\label{t:positivity}
\centering
\begin{tabular}{|c|cc|cc|cc|cc|}
\hline
\multicolumn{1}{|c|}
 {\multirow{2}{*}{$\alpha$} }  & 
  \multicolumn{2}{| c |}{$ h=1/10$} &  \multicolumn{2}{| c |}{ $ h=1/20$ } &  \multicolumn{2}{| c |}{ $ h=1/40 $} &  \multicolumn{2}{| c |}{ $h=1/80$ }     \\   \cline{2-9}%
          &  MrowS  &  MoffD   &   MrowS     &  MoffD    &   MrowS     &  MoffD   &  MrowS     &  MoffD   \\ 
 \hline
  0.100 & 0.11950 & 0.018190 & 0.05958 & 0.010435  & 0.02977  & 0.005992 &  0.01488 &  0.003442\\
  0.200 & 0.14097 & 0.014288 & 0.07006 & 0.009397  & 0.03498  & 0.006198 &  0.01748 &  0.004089\\
  0.300 & 0.16354 & -0.000425 & 0.08102 & -0.000025 & 0.04042 & -0.000002 & 0.02020 & -0.000001\\
  0.500 & 0.20417 & -0.000852 & 0.10052 & -0.000049 & 0.05006 & -0.000003 & 0.02501 & -0.000002\\
  0.700 & 0.21119 & -0.001177 & 0.10342 & -0.000068 & 0.05142 & -0.000004 & 0.02568 & -0.000003\\ 
  0.800 & 0.18326 & -0.001138 & 0.08963 & -0.000065 & 0.04452 & -0.000004 & 0.02223 & -0.000002\\
  0.900 & 0.11824 & -0.000805 & 0.05786 & -0.000045 & 0.02872 & -0.000003 & 0.01433 & -0.000002\\  
  \hline
\end{tabular}
\end{table}

On Table \ref{t:positivity-f} we report more computations of this kind using refined values 
around $\alpha=0.3$. We see that in the one-dimensional case the matrix becomes an $M$-matrix for $\alpha \ge 0.287$.

\begin{table}[ht!]
\caption{ The maximum row-sum $MrowS$) and largest off-diagonal entries (MoffD) of matrix  $\wcalMt \wcalAt^{\alpha}$
for one-dimensional problem.}\label{t:positivity-f}
\centering
\begin{tabular}{|c|cc|cc|cc|cc|}
\hline
\multicolumn{1}{|c|}
 {\multirow{2}{*}{$\alpha$} }  & 
  \multicolumn{2}{| c |}{$ h=1/10$} &  \multicolumn{2}{| c |}{ $ h=1/20$ } &  \multicolumn{2}{| c |}{ $ h=1/40 $} &  \multicolumn{2}{| c |}{ $h=1/80$ }     \\   \cline{2-9}%
          &  MrowS  &  MoffD   &   MrowS     &  MoffD    &   MrowS     &  MoffD   &  MrowS     &  MoffD   \\ 
 \hline
  0.284 & 0.15991 & 0.000733 & 0.07927  & 0.0004962  & 0.03955  & 0.0003949 &  0.01976 &  0.0002703 \\
  0.286 & 0.16036 &  0.000221 & 0.07949 & 0.0001166  & 0.03966 &  0.0000838 &  0.01982 &  0.0000621 \\
  0.288 & 0.16082 & -0.000302 & 0.07971 & -0.0000436 & 0.03977 & -0.0000015 &  0.01987 & -0.0000001 \\
  0.290 & 0.16127 & -0.000406 & 0.07993 & -0.0000239 & 0.03988 & -0.0000015 &  0.01993 & -0.0000001 \\ 
  0.292 & 0.16172 & -0.000413 & 0.08037 & -0.0000243 & 0.03999 & -0.0000015 &  0.01998 & -0.0000001 \\ 
  \hline
\end{tabular}
\end{table}

We also performed similar computations for the Poisson equation in an $L$-shaped domain,
namely $ \Omega= \{(0,1) \times (0,1)\}\setminus \{(0.5,1) \times (0.5,1) \} $. In this case we
introduce an uniform mesh with step-size in both directions $h=1/(n+1)$ so that the 
stiffness matrix is and $M$-matrix of size $N= 0.75 n^2$. The results are reported in Table \ref{t:positivity-2D}.
We note that the matrix has many negative elements in a row. However, the existence of a 
positive off-diagonal entry in all cases suggests that $\wcalMt \calAt^{\alpha}$ is NOT an $M$-matrix 
when $\wcalMt$ is consistent mass matrix and therefore, the method fails to preserve the positivity 
for all $ \alpha \in (0,1)$. Similar are the results of a rectangular domain on a uniform square mesh. 
These results are a bit different from the one-dimensional computations
shown on Table \ref{t:positivity} and \ref{t:positivity-f}, where we see that for $\alpha \ge 0.3$
we have an $M$-matrix and the method will preserve positivity. We expect that in 3-D problems 
the consistent mass methods will not be positivity preserving.

\begin{table}[ht!]
\caption{ The maximum row sum (MrowS) and the largest off-diagonal entries (MoffD) of matrix  
$\wcalMt \wcalAt^{\alpha}$ for L-shaped domain.}\label{t:positivity-2D}
\centering
\begin{tabular}{|c|cc|cc|cc|}
\hline
\multicolumn{1}{|c|}
 {\multirow{2}{*}{$\alpha$} }  & 
  \multicolumn{2}{| c |}{$ h=1/10$} &  \multicolumn{2}{| c |}{ $ h=1/20$ } &  \multicolumn{2}{| c |}{ $ h=1/40 $} 
  \\   \cline{2-7}%
          &  MrowS  &  MoffD   &   MrowS     &  MoffD    &   MrowS     &  MoffD  \\ 
 \hline
  0.300 & 0.02403 &  0.002695 & 0.00578 &  0.001022 & 0.00143 &  0.000387  \\ 
  0.500 & 0.03820 &  0.004974 & 0.00905 &  0.002483 & 0.00222 &  0.001242 \\ 
  0.700 & 0.04891 &  0.007004 & 0.01170 &  0.004558 & 0.00285 &  0.003007 \\ 
  0.800 & 0.04628 &  0.006979 & 0.01134 &  0.005223 & 0.00275 &  0.003958 \\ 
  0.900 & 0.03176 &  0.004897 & 0.00818 &  0.004222 & 0.00198 &  0.003675 \\ 
  \hline
\end{tabular}
\end{table}

\subsection{Lumped mass finite element method} 
\label{ss:error-lumped}
As discuses in Subsection \ref{ss:fem-lumped}, when we employ mass lumping,  
both the semi-discrete and fully discrete approximations satisfy the positivity property, i.e.,
if $f$ is continuous and $f\ge 0$, then $u_h$ and $w_h$ given by
\eqref{massu} and \eqref{massw} are both non-negative.  This is a
consequence of the fact that the lumped mass matrix is diagonal with
positive diagonal entries, \eqref{bal} and Proposition~\ref{lemma:BURA}.   
Besides, since many finite difference schemes could be considered as obtained by
lumped mass FEM, we have as a by-product of the result below, an 
error estimate for the finite difference approximations of spectral sub-diffusion problems.
We are not aware of rigorous proof of such result.

We conclude this section with an error estimate in the case of two-dimensional problems with 
full regularity and data $f \in H^{1+\gamma}(\Omega)$:

\begin{theorem} \label{masslumped} 
Let $\Omega\subset \RR^2$ and  suppose that \eqref{fullreg} holds.
  Then for $f\in H^{1+\gamma}(\Omega)$ with $\gamma>0$,  $u_h$ given by 
\eqref{massu}  satisfies 
\beq\label{error-masslumped}
\|\calA^{-\alpha}f - u_h \| \le C (h^{2\alpha}+h^{1+\gamma}) \|f\|_{H^{1+\gamma}(\Omega)}
\eeq
with $C$ not depending on $h$.
\end{theorem}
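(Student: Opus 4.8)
The plan is to compare the lumped-mass solution with the consistent-mass solution of Theorem~\ref{FS-thm}, so that the rate $h^{2\alpha}$ established there can be reused and only the effect of lumping and of replacing $\pi_h$ by $\calI_h$ has to be estimated. Write $\calAt$ for the lumped operator \eqref{A-lumped} entering \eqref{massu} and $\bar\calAt$ for the consistent-mass operator of Theorem~\ref{FS-thm}, and split
\[
\calA^{-\alpha}f-u_h=\underbrace{\big[\calA^{-\alpha}f-\bar\calAt^{-\alpha}\pi_hf\big]}_{T_1}+\underbrace{\big[\bar\calAt^{-\alpha}\pi_hf-\calAt^{-\alpha}\pi_hf\big]}_{T_2}+\underbrace{\calAt^{-\alpha}\big[\pi_hf-\calI_hf\big]}_{T_3}.
\]
By Theorem~\ref{FS-thm}, $\|T_1\|\le Ch^{2\alpha}\|f\|$. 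For $T_3$ I would use that the spectrum of $\calAt$ is bounded below by a mesh-independent constant (the Poincar\'e bound discussed after \eqref{uh}), so that $\|\calAt^{-\alpha}\|\le\lambda_1^{-\alpha}\le C$; since $\Omega\subset\RR^2$ and $\gamma>0$, the embedding $H^{1+\gamma}(\Omega)\hookrightarrow C(\overline\Omega)$ makes $\calI_hf$ well defined, and the standard projection and interpolation bounds give $\|T_3\|\le C\big(\|f-\pi_hf\|+\|f-\calI_hf\|\big)\le Ch^{1+\gamma}\|f\|_{H^{1+\gamma}(\Omega)}$. This is the source of the $h^{1+\gamma}$ term.

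The heart of the argument is $T_2$, the pure mass-lumping error with fixed data $\pi_hf\in V_h$. Writing $\epsilon_h(v,\theta):=(v,\theta)_h-(v,\theta)$ for the vertex-quadrature functional and representing both fractional powers by the Balakrishnan integral \eqref{bal},
\[
T_2=\frac{\sin(\pi\alpha)}\pi\int_0^\infty\mu^{-\alpha}\,(\bar w_{\mu,h}-w_{\mu,h})\,d\mu,
\]
where $\bar w_{\mu,h}\in V_h$ solves $\mu(\bar w_{\mu,h},\theta)+a(\bar w_{\mu,h},\theta)=(\pi_hf,\theta)$ and $w_{\mu,h}\in V_h$ solves $\mu(w_{\mu,h},\theta)_h+a(w_{\mu,h},\theta)=(\pi_hf,\theta)_h$ for all $\theta\in V_h$. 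Subtracting, $e_\mu^h:=\bar w_{\mu,h}-w_{\mu,h}$ obeys the error identity
\[
\mu(e_\mu^h,\theta)+a(e_\mu^h,\theta)=\mu\,\epsilon_h(w_{\mu,h},\theta)-\epsilon_h(\pi_hf,\theta),\qquad\theta\in V_h .
\]
I would then invoke the standard quadrature estimate $|\epsilon_h(\chi,\psi)|\le Ch^2\|\chi\|_a\|\psi\|_a$ for $\chi,\psi\in V_h$ (cf.\ \cite{Thomee2006}; it holds because $\epsilon_h$ annihilates constants, so only the element gradients enter). Taking $\theta=e_\mu^h$ and using the resolvent bound $\|w_{\mu,h}\|_a\le C\mu^{-1/2}\|f\|$ together with the $H^1$-stability $\|\pi_hf\|_a\le C\|f\|_{H^1}$ yields the energy estimate $\|e_\mu^h\|_\mu\le Ch^2(\mu^{1/2}+1)\|f\|_{H^1}$, hence $\|e_\mu^h\|_\mu\le Ch\|f\|_{H^1}$ for $\mu\le h^{-2}$.

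To control the $L^2$-norm $\|e_\mu^h\|$ under the integral I would run a $\mu$-dependent duality argument paralleling the one in the proof of Theorem~\ref{FS-thm}: let $z_\mu\in H^1_0(\Omega)$ solve $\mu(\phi,z_\mu)+a(\phi,z_\mu)=(\phi,e_\mu^h)$ for all $\phi$, use full regularity in the forms $\|z_\mu\|_{H^2}\le C\|e_\mu^h\|$ and $\|z_\mu\|_a\le C\min(1,\mu^{-1/2})\|e_\mu^h\|$, and insert its Ritz/interpolation approximation $z_h$ into the error identity; the approximation term and the two consistency terms $\mu\,\epsilon_h(w_{\mu,h},z_h)$ and $\epsilon_h(\pi_hf,z_h)$ are each bounded by $Ch^2\|f\|_{H^1}\|e_\mu^h\|$, giving $\|e_\mu^h\|\le Ch^2\|f\|_{H^1}$ for all $\mu\le h^{-2}$. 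For $\mu>h^{-2}$ no gain is needed and the crude bounds $\|\bar w_{\mu,h}\|,\|w_{\mu,h}\|\le C\mu^{-1}\|f\|$ give $\|e_\mu^h\|\le C\mu^{-1}\|f\|$. Splitting the integral at $\mu=h^{-2}$ exactly as in Theorem~\ref{FS-thm},
\[
\|T_2\|\le C\|f\|_{H^1}\Big[h^2\!\int_0^{h^{-2}}\!\!\mu^{-\alpha}\,d\mu+\int_{h^{-2}}^\infty\!\!\mu^{-1-\alpha}\,d\mu\Big]\le Ch^{2\alpha}\|f\|_{H^1(\Omega)} .
\]
Combining the three bounds yields \eqref{error-masslumped}. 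The main obstacle is the $T_2$ estimate, and within it the $\mu$-uniform duality step: one must make sure the negative power $\mu^{-1/2}$ produced by the crude energy estimate near $\mu=0$ is removed by duality (using the Poincar\'e bound $\|z_\mu\|_a\le C\|e_\mu^h\|$ for the unweighted term and the resolvent bound $\|z_\mu\|_a\le C\mu^{-1/2}\|e_\mu^h\|$ for the $\mu$-weighted term), so that the factor $h^2$ survives uniformly for $\mu\le h^{-2}$ and the $\mu$-integral reproduces precisely the rate $h^{2\alpha}$.
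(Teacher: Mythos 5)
Your proposal is correct, and its skeleton is the paper's: reuse Theorem~\ref{FS-thm} for the consistent-mass error, treat the lumping perturbation through the Balakrishnan integral with the quadrature bound $|(v,w)-(v,w)_h|\le ch^2\|v\|_a\|w\|_a$ and an energy estimate, split the $\mu$-integral at $h^{-2}$, and charge the data replacement with $h^{1+\gamma}$. Two deviations are worth recording. First, you route the data error through the lumped operator ($T_3=\calAt^{-\alpha}(\pi_hf-\calI_hf)$, with the consistent-mass term fed by $\pi_hf$), whereas the paper peels off $\calA^{-\alpha}(f-\calI_hf)$ at the continuous level and uses $\calI_hf$ as the common datum in both discrete problems; your variant additionally needs $H^1$-stability of $\pi_h$ (for the term $\|\pi_hf\|_a$ in the consistency estimate) and the uniform equivalence of $(\cdot,\cdot)_h$ with $(\cdot,\cdot)$ on $V_h$ to bound $\|\calAt^{-\alpha}\|$ — both available on the paper's quasi-uniform meshes, so this is fine. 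Second, and more substantively: the duality argument you call the ``heart'' of the proof is superfluous, and the paper does not perform it. From the energy estimate $\mu\|e_\mu\|^2+\|e_\mu\|_a^2\le ch^4(\mu+1)\|f\|^2$ (the paper's \eqref{emf}), one gets $\|e_\mu\|\le ch^2(1+\mu^{-1})^{1/2}\|f\|\le ch^2\sqrt2\,\|f\|$ for $\mu\ge1$ simply by dividing by $\mu$, while for $\mu\le 1$ the Poincar\'e inequality applied to the \emph{$a$-norm component} of the same estimate gives $\|e_\mu\|\le c\|e_\mu\|_a\le ch^2\sqrt 2\,\|f\|$; so the $\mu^{-1/2}$ degeneration you worry about never has to be removed by duality. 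The structural reason is that, unlike in Theorem~\ref{FS-thm} — where the energy estimate only yields $O(h)$ and duality is genuinely needed to upgrade to $O(h^2)$ — here the consistency functional $\epsilon_h$ is already $O(h^2)$ in both arguments, so the energy estimate alone delivers the $L^2$ rate. Your duality step does go through (the resolvent regularity $\|z_\mu\|_{H^2}\le C\|e_\mu^h\|$ and the two bounds on $\|z_\mu\|_a$ combine as you indicate, uniformly for $\mu\le h^{-2}$), but it buys nothing; the paper's argument is leaner at exactly this point.
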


\begin{proof}  Let $\widetilde V_h$ denote the set of continuous piecewise linear
  functions with respect to the mesh on $\Omega$ including non-vanishing
  functions on $\partial \Omega$.   
For the purposes of this proof, we consider $\calI_h$ as a
  map from $C^0(\overline \Omega)$ into $\widetilde V_h$ even though the
  boundary values do not enter into \eqref{massu} (or \eqref{massw}). 
The resulting mass lumped matrix satisfies the  following estimate:
\beq
|(v,w)-(v,w)_h | \le c h^2 \|v\|_a \|w\|_a,\Forall v,w\in \widetilde
V_h
\label{massapp}
\eeq
with $\|\cdot\|_a$ defined by \eqref{a-norm} (same as in the proof of Theorem~\ref{FS-thm}).  

In  addition, the norm $\|\cdot\|=(\cdot,\cdot)_h^{1/2}$
is uniformly equivalent to $\|\cdot\|$ on $\widetilde V_h$ 
 with equivalence constants  independent of $h$.   
We also use well known properties for the interpolant $\calI_h$:
\beq
\|\calI_h f\|+\|\calI_h f\|_a
+h^{-1-\gamma} \|(f-\calI_hf)\|\le C\|f\|_{H^{1+\gamma}(\Omega)}.\label{ihprop}
\eeq

By \eqref{ihprop} and the stability of $\calA^{-\alpha} $,
\beq
\|\calA^{-\alpha} (f-\calI_hf) \le C \|f-\calI_hf\|\le
Ch^{1+\gamma}\|f\|.\label{firsttri}
\eeq
Thus, we are left to bound 
\beq
\| \calA^{-\alpha} \calI_h f -u_h\|\le \|\calA^{-\alpha} \calI_h f-
\calAt^{-\alpha}   \calI_h f \| + \|\calAt^{-\alpha}   \calI_h f-u_h\|.
\label{twoterm}
\eeq
Here $\calAt$ is the finite element operator appearing in \eqref{fea}
and Theorem~\ref{FS-thm}.    By Theorem~\ref{FS-thm}  and \eqref{ihprop},
$$
\|\calA^{-\alpha} \calI_h f-
\calAt^{-\alpha} \calI_h  f\|\le C h^{2\alpha} \| \calI_h f \|\le  C h^{2\alpha}
\|f\|_{H^{1+\gamma}(\Omega).}$$

For the second term in \eqref{twoterm}, we use the Balakrishnan formula and write
\beq
\bal
 \|\calAt^{-\alpha}   \calI_h f-u_h\|    &     =
\frac {\sin(\pi \alpha)} \pi 
\bigg \|\int_0^\infty \mu^{-\alpha} (w_{\mu,h}-\tilde w_{\mu,h}) \, d\mu\bigg \|  \\
          &       \le \frac {\sin(\pi \alpha)} \pi \int_0^\infty\mu^{-\alpha}
\|w_{\mu,h}-\tilde w_{\mu,h}\|\, d\mu
\eal
\label{finalt}
\eeq
where $w_{\mu,h}$ satisfies \eqref{wth} with $f$ replaced by $\calI_h f$
and  $\tilde w_{\mu,h}\in V_h$ satisfies
$$
\mu (\tilde w_{\mu,h},\theta)_h+a(\tilde w_{\mu,h},\theta)=(\calI_h f,\theta)_h
\quad \Forall \theta \in V_h.
$$

It follows that for $e_\mu=w_{\mu,h} -\tilde w_{\mu,h}$ and $\phi \in V_h$, 
\beq
\mu ( e_\mu,\phi)+a(e_\mu,\phi)=\mu \big[(\tilde w_{\mu,h},\phi)_h
-(\tilde w_{\mu,h},\phi)\big]
+\big[(\calI_h f,\phi)-(\calI_h f,\phi)_h \big].
\label{emu}
\eeq
Taking $\phi=e_\mu $ and applying \eqref{massapp} gives
\beq
 \mu \|e_\mu\|^2 + \|e_\mu\|^2_a \le c h^2\big [\mu\|\tilde w_{\mu,h}\|_a
\|e_\mu\|_a + \|\calI_h f\|_a \|e_\mu\|_a\big].
\label{em1}
\eeq
The same argument that showed \eqref{onen} leads to 
$$
\|\tilde w_{\mu,h}\|_a \le \mu^{-1/2} \| \calI_h f\|_h.
$$
Thus, \eqref{em1} implies
$$
 \mu \|e_\mu\|^2 + \|e_\mu\|^2_a \le c h^2[\mu^{1/2} +1] \|f\|_{H^{1+\gamma}(\Omega)}
\|e_\mu\|_a. 
$$
A straightforward application of the arithmetic-geometric mean inequality then
gives
\beq
 \mu \|e_\mu\|^2 + \|e_\mu\|^2_a \le c h^4[\mu +1]
  \|f\|^2_{H^{1+\gamma}(\Omega)}.
\label{emf}
\eeq

We finally bound the integral in \eqref{finalt} by breaking up the
integration interval and bounding the resulting subinterval integrals.  
By \eqref{emf} and the Poincar\'e inequality,
$$\int_0^1\mu^{-\alpha}
\|e_\mu \|\, d\mu\le  c\int_0^1\mu^{-\alpha}
\|e_\mu \|_a\, d\mu \le ch^2
  \|f\|^2_{H^{1+\gamma}(\Omega)}$$
and
$$\int_1^{h^{-2}}\mu^{-\alpha}
\|e_\mu \|\, d\mu\le Ch^2  \|f\|_{H^{1+\gamma}(\Omega)}   \int_1^{h^{-2}}
\mu^{-\alpha} \, d\mu \le C h^{2\alpha}  \|f\|_{H^{1+\gamma}(\Omega)}.
$$
As in \eqref{et2},
$$\|e_\mu\|\le C \mu^{-1} \| \calI_h f \|\le C \mu^{-1} \| f\|
_{H^{1+\gamma}(\Omega)}$$
so that 
$$\int_{h^{-2}}^\infty \mu^{-\alpha}
\|e_\mu \|\, d\mu\le C  \|f\|_{H^{1+\gamma}(\Omega)}\int_{h^{-2}}^\infty \mu^{-1-\alpha}
\, d\mu= C h^{2\alpha}  \|f\|_{H^{1+\gamma}(\Omega)}.$$
Combining the above estimates completes the proof of the
theorem.\end{proof}

\begin{remark} The above theorem and its proof remains valid when
$\Omega\subset \RR^d$ for $d=1$ and $d=3$ provided that $1+\gamma$ is
replaced by $d/2+\gamma$.  We believe that more refined  analysis could lower the 
required smoothness of $f$, but  this will involve solution of number of technical issues that 
are beyond the scope of this paper.
\end{remark}
 
\begin{corollary}\label{total-error}
Let $\Omega\subset \RR^2$ and  suppose that \eqref{fullreg} holds.
Using BURA approximation error estimate \eqref{pbest}, the error bound of the lumped mass finite
element method \eqref{error-masslumped}, and  the inequality $\|f_h\|=\|\pi_h f \| \le \|f\|$ we get 
\beq\label{eq:total-error}
\|\calA^{-\alpha}f - w_h \| \le C (h^{2\alpha}+h^{1+\gamma}) \|f\|_{H^{1+\gamma}(\Omega)} + 
\lambda_1^{-\alpha}E_{\alpha,k}\|f\|
\le C (h^{2\alpha}+h^{1+\gamma} + e^{-2\pi\sqrt{k\alpha}})
\|f\|_{H^{1+\gamma}(\Omega)}
\eeq
with $C$ not depending on $h$ and $k$. Then the contributions of the finite element discretization and the 
BURA approximation to the total error can be balanced choosing properly the parameters $h$ and $k$.
We see that the BURA error is fully controlled by $ E_{\alpha,k}$ and the $L^2$-norm of the data $f$.
This allows to choose $k$ by using Table \ref{t:error} once we fix the desired accuracy of the computations.
To choose the mesh that guarantees the same accuracy 
we need to use either Richardson extrapolation or any other technique for error control 
of the finite element method by mesh refinement. Such discussion is beyond the scope of the paper.
\end{corollary}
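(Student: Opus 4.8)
The plan is to read off \eqref{eq:total-error} from a single triangle inequality that separates the finite element discretization error from the rational (BURA) approximation error, and then to substitute the two estimates already established. With $u_h$ the lumped mass semi-discrete solution \eqref{massu} and $w_h$ its BURA approximation \eqref{massw}, I would first write, in the $L^2(\Omega)$ norm $\|\cdot\|$,
\[
\|\calA^{-\alpha}f - w_h\| \le \|\calA^{-\alpha}f - u_h\| + \|u_h - w_h\|.
\]
Theorem~\ref{masslumped} disposes of the first term at once: under \eqref{fullreg} and for $f\in H^{1+\gamma}(\Omega)$ it gives $\|\calA^{-\alpha}f - u_h\| \le C(h^{2\alpha}+h^{1+\gamma})\|f\|_{H^{1+\gamma}(\Omega)}$ with $C$ independent of $h$.

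The second term is the BURA error for the lumped mass scheme, i.e.\ the analog of \eqref{pbest}. I would reproduce the eigenfunction/Parseval argument of Subsection~\ref{ss:FEM}, but using the generalized eigenproblem $a(\psi_i,\theta)=\lambda_i(\psi_i,\theta)_h$ indicated in Subsection~\ref{ss:fem-lumped}; expanding $\calI_h f$, $u_h$ and $w_h$ in this lumped-mass orthonormal basis and bounding each spectral factor $|(\lambda_1/\lambda_i)^\alpha - r_{\alpha,k}(\lambda_1/\lambda_i)|$ by $E_{\alpha,k}$ yields $\|u_h-w_h\|_h \le \lambda_1^{-\alpha}E_{\alpha,k}\|\calI_h f\|_h$. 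Since $(\cdot,\cdot)_h^{1/2}$ is $h$-uniformly equivalent to $\|\cdot\|$ on $V_h$ (a fact already used in the proof of Theorem~\ref{masslumped}), this transfers to $\|u_h-w_h\|\le C\lambda_1^{-\alpha}E_{\alpha,k}\|\calI_h f\|$, and the interpolation bound \eqref{ihprop} gives $\|\calI_h f\|\le C\|f\|_{H^{1+\gamma}(\Omega)}$. At the level of constants this is exactly the shorthand $\|f_h\|=\|\pi_h f\|\le\|f\|$ cited in the statement.

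It remains to render the two factors in the BURA term harmless. Stahl's bound \eqref{rat-error} with power $\alpha$ gives $E_{\alpha,k}\le C_\alpha e^{-2\pi\sqrt{k\alpha}}$, while the Poincar\'e--Friedrichs lower bound $\lambda_1\ge C_P>0$ (recalled after \eqref{uh}) forces $\lambda_1^{-\alpha}\le C_P^{-\alpha}$, a constant independent of $h$ and $k$. Substituting both and absorbing constants turns the BURA contribution into $C e^{-2\pi\sqrt{k\alpha}}\|f\|_{H^{1+\gamma}(\Omega)}$, which combined with the discretization term gives \eqref{eq:total-error}. I do not anticipate a genuine difficulty here; the only point demanding care is reconciling the consistent-mass statement \eqref{pbest} with the lumped-mass objects $u_h,w_h$ — that is, invoking the lumped-mass version of the Parseval estimate and passing between $\|\cdot\|_h$, $\|\cdot\|$ and the data norms by means of the norm equivalence and \eqref{ihprop}.
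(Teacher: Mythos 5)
Your proof is correct and follows essentially the same route as the paper: a triangle inequality splitting $\|\calA^{-\alpha}f-w_h\|$ into the lumped-mass discretization error handled by Theorem~\ref{masslumped} and the BURA error handled by the spectral/Parseval estimate, followed by Stahl's bound \eqref{rat-error} and the uniform lower bound on $\lambda_1$. If anything, you are more careful than the paper's one-line justification: you explicitly derive the lumped-mass analogue of \eqref{pbest} via the eigenproblem $a(\psi_i,\theta)=\lambda_i(\psi_i,\theta)_h$ and pass from $\|\calI_h f\|$ to $\|f\|_{H^{1+\gamma}(\Omega)}$ using the norm equivalence and \eqref{ihprop}, whereas the corollary's cited inequality $\|\pi_h f\|\le\|f\|$ glosses over the fact that the lumped scheme is driven by $\calI_h f$ rather than $\pi_h f$.
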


\begin{corollary}\label{FD-convergence}
As a by-product of the error analysis of the lumped mass finite element method we also 
obtain an error bound for the finite difference method  for the two-dimensional case trough
it equivalence to the lumped mass approximation on uniform meshes. To the best
of our knowledge, this fact has not been know before.
\end{corollary}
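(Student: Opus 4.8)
The plan is to make the equivalence asserted in the statement completely explicit and then invoke Theorem~\ref{masslumped} verbatim; no new estimate is needed, only a careful identification of the two schemes and of the norms in which they are measured. First I would verify, for Example~2 of Subsection~\ref{ss:FD-examples}, the matrix identity underlying the claim: on the uniform square mesh of size $h=1/(n+1)$, triangulated by splitting each cell along its lower-left to upper-right diagonal, the piecewise linear stiffness matrix $\wcalSt$ for $-\Delta$ equals $(n+1)^2$ times the $5$-point operator in \eqref{FD-matrix}, while the lumped mass matrix ${\wcalMt}_h$ from \eqref{mass-lumping} is diagonal with every interior entry equal to $h^2$. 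Hence $\wcalAt={\wcalMt}_h^{-1}\wcalSt$ of \eqref{A-lumped} coincides with the finite difference matrix \eqref{FD-matrix}, and so do the powers $\wcalAt^{-\alpha}$.

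Next I would match the data, and therefore the solutions. The finite difference right-hand side $\cI_h f=\tilfh$ in \eqref{fda} is the vector of nodal values of $f$, which is exactly the coefficient vector $\widetilde F$ of the interpolant $\calI_h f$ with respect to the nodal basis. Consequently the finite difference semi-discrete solution $\tiluh=\wcalAt^{-\alpha}\tilfh$ and the coefficient vector of the lumped mass semi-discrete solution $u_h=\calAt^{-\alpha}\calI_h f$ of \eqref{massu} are literally the same vector in $\R^N$. Reinterpreting the finite difference solution as the piecewise linear function $u_h\in V_h$ assembled from these nodal values thus produces precisely the lumped mass finite element approximation, and Theorem~\ref{masslumped} applies to it without modification, giving
\[
\|\calA^{-\alpha} f - u_h\| \le C\,(h^{2\alpha}+h^{1+\gamma})\,\|f\|_{H^{1+\gamma}(\Omega)}
\]
for $\Omega\subset\R^2$ under \eqref{fullreg} and $f\in H^{1+\gamma}(\Omega)$; the cases $d=1,3$ follow from the accompanying remark with $1+\gamma$ replaced by $d/2+\gamma$.

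The one point that requires care --- and the main, if modest, obstacle --- is the bookkeeping between norms, since the finite difference error is naturally an $\ell_2(\R^N)$ quantity whereas Theorem~\ref{masslumped} delivers an $L^2(\Omega)$ bound on a function. I would resolve this by passing through the lumped mass norm $\|\cdot\|_h=(\cdot,\cdot)_h^{1/2}$: because ${\wcalMt}_h$ has all interior entries $h^2$ one has $\|u_h\|_h=h\,\|\tiluh\|_{\ell_2}$, and the equivalence of $\|\cdot\|_h$ with $\|\cdot\|$ on $V_h$ with $h$-independent constants (as already used in the proof of Theorem~\ref{masslumped}) converts the displayed $L^2$ estimate into a bound on the scaled Euclidean error of the solution vector, or equivalently lets one state the result directly for the interpolated function. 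Once the scheme is recognized as lumped mass finite elements, no further step is delicate: the entire error analysis of Subsection~\ref{ss:error-lumped} transfers unchanged, which is exactly the content of the corollary.
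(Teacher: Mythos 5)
Your proposal is correct and follows exactly the route the paper intends: the corollary rests on the equivalence (already set up in Example~2 of Subsection~\ref{ss:FD-examples}) between the $5$-point finite difference matrix and the lumped mass matrix $\wcalAt={\wcalMt}_h^{-1}\wcalSt$ on the uniform triangulated mesh, after which Theorem~\ref{masslumped} applies verbatim. Your explicit verification of the matrix identity, the identification of $\tilfh$ with $\widetilde F$, and the $\ell_2$-versus-$L^2$ bookkeeping simply spell out details the paper leaves implicit.
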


\section{Concluding remarks}\label{section5}
In this paper we study algorithms of optimal complexity for solving the 
system of algebraic equations $\wcalAt^\alpha \tiluh=\tilfh$, 
$0< \alpha <1$ for $\tiluh \in \R^N$, where 
$\wcalAt$ is a symmetric and positive definite $N \times N$ matrix with spectrum 
in $[\lambda_1, \lambda_N]$ which is obtained from discretization of a second order
elliptic problem by a finite difference or finite elements method. Two 
methods, BURA and P-BURA, are analyzed and experimentally studied.
They are based on  the best uniform rational approximation 
$r_{\gamma,k}(t)= P_k(t)/Q_k(t)$ of $t^{\gamma}$. We note that these could be precomputed 
and later used in the computations. Such results for various values of $\alpha$
and $k$ could be found in the report \cite{BURA-Tables-2019}.

The presented estimates show that both methods have exponential 
convergent rate with respect to $k$. They reduce the nonlocal fractional
diffusion problem to solution of small number (determined by $k$) of
systems in the form $(\wcalAt +c \wcalIt) u_h= f_h$, $c \ge 0$.
The algorithm is optimal with respect to $N$, assuming that
solvers of optimal complexity (e.g. multigrid or multilevel) are used   
for the related sparse symmetric and positive definite (discrete elliptic)
problems. More precisely, the computational complexity is $O(kN)$.

The presented numerical tests support the theoretical estimates.
They prove the concept of the new P-BURA method and show its high efficiency.
In contrast to BURA, the accuracy of P-BURA method does not depend on the condition
number of $\calAt$. This makes P-BURA robust with respect to the mesh parameter 
$h$, which also holds true in the case of approximations on locally refined meshes.

In general, the regularity of solution of the considered fractional 
diffusion problems decreases with decreasing of $\alpha$. As shown 
theoretically and numerically in \cite{BP15},
the convergence rate of $||u-u_h||_{L^2}$ is at best  $O(h^{2\alpha})$. 
Here we studied also the lumped mass method that preserves the positivity 
property.  In this context, the used here local mesh refinement for the
CheckerBoard right hand side $f$ shows new promising opportunities 
for a substantial increase of the accuracy based on the robustness 
of P-BURA method. Even more impressive are the obtained results for Example 4
the case of Dirac delta-function right-hand-side.
    
Within the context of this paper, the question about the proper 
norms and algorithms for adaptive mesh refinement is very important, 
but not studied. This holds as well for the case when $f$ has 
lower than $L^2$-regularity. We feel that a  study of these issues needs a 
separate rigorous technical analysis which remains out of the scope of this paper.

\section*{Acknowledgement}
This research has been partially supported by the Bulgarian National Science Fund under grant 
No. BNSF-DN12/1. The work of  R. Lazarov was supported in parts by NSF-DMS \#1620318 grant.

We acknowledge also the provided access to the e-infrastructure of the Centre for 
Advanced Computing and Data Processing, with the financial support by the Grant No.
BG05M2OP001-1.001-0003, financed by the Science and Education for Smart Growth
Operational Program (2014-2020) and co-financed by the European Union through the
European structural and Investment funds.

\bibliographystyle{abbrv}
\bibliography{BURA_refer_exp}  

\end{document}